\newtheorem{theorem}{Theorem}[section]
\newtheorem{assumption}{Assumption}
\DeclareMathOperator*{\argmin}{arg\,min}
\algnewcommand\algorithmicinput{\textbf{Input:}}
\algnewcommand\INPUT{\item[\algorithmicinput]}
\journal{European Journal of Operational Research}
\begin{document}

\begin{frontmatter}



\title{Scalable Multi-Level Optimization for Sequentially Cleared Energy Markets with a Case Study on Gas and Carbon Aware Unit Commitment}




\author[inst1]{Yuxin Xia \texorpdfstring{\corref{cor1}}{}}
\cortext[cor1]{Corresponding author}
\affiliation[inst1]{organization={School of Engineering, The University of Edinburgh},
            city={Edinburgh},
            postcode={EH9 3FB}, 
            country={UK}}

\author[inst2]{Iacopo Savelli}
\author[inst3]{Thomas Morstyn}
\affiliation[inst2]{organization={GREEN center, Bocconi University},
            city={Milano},
            postcode={20136}, 
            country={Italy}}
\affiliation[inst3]{organization={Department of Engineering Science, University of Oxford},
            city={Oxford},
            postcode={OX3 7DQ}, 
            country={UK}}
\begin{abstract}
This paper examines Mixed-Integer Multi-Level problems with Sequential Followers (MIMLSF), a specialized optimization model aimed at enhancing upper-level decision-making by incorporating anticipated outcomes from lower-level sequential market-clearing processes. We introduce a novel approach that combines lexicographic optimization with a weighted-sum method to asymptotically approximate the MIMLSF as a single-level problem, capable of managing multi-level problems exceeding three levels. To enhance computational efficiency and scalability, we propose a dedicated Benders decomposition method with multi-level subproblem separability. To demonstrate the practical application of our MIMLSF solution technique, we tackle a unit commitment problem (UC) within an integrated electricity, gas, and carbon market clearing framework in the Northeastern United States, enabling the incorporation of anticipated costs and revenues from gas and carbon markets into UC decisions. This ensures that only profitable gas-fired power plants (GFPPs) are committed, allowing system operators to make informed decisions that prevent GFPP economic losses and reduce total operational costs under stressed electricity and gas systems. The case study not only demonstrates the applicability of the MIMLSF model but also highlights the computational benefits of the dedicated Benders decomposition technique, achieving average reductions of 32.23\% in computing time and 94.23\% in optimality gaps compared to state-of-the-art methods.
\end{abstract}



\begin{keyword}
(R) OR in energy \sep Multi-level problems  \sep Sequential markets \sep Benders subproblem separability \sep Unit commitment.
\end{keyword}

\end{frontmatter}


\section{Introduction}
Sequential decision-making is fundamental to the clearing processes of diverse energy markets, which are influenced by temporal, spatial, operational, and hierarchical dimensions. Moreover, critical operations such as unit commitment (UC) are executed ahead of time, often without full knowledge of subsequent sequential market outcomes. This lack of foresight and awareness can result in suboptimal or economically inefficient decisions, as the initial decisions may fail to incorporate the future market conditions that subsequently unfold. To this end, a hierarchical approach is needed, enabling decision-makers such as system operators, regulators, and strategic agents to anticipate and adjust for future market conditions, thereby improving risk management and decision-making.

\begin{figure}[htp]
    \centering
    \includegraphics[width=0.7\columnwidth]{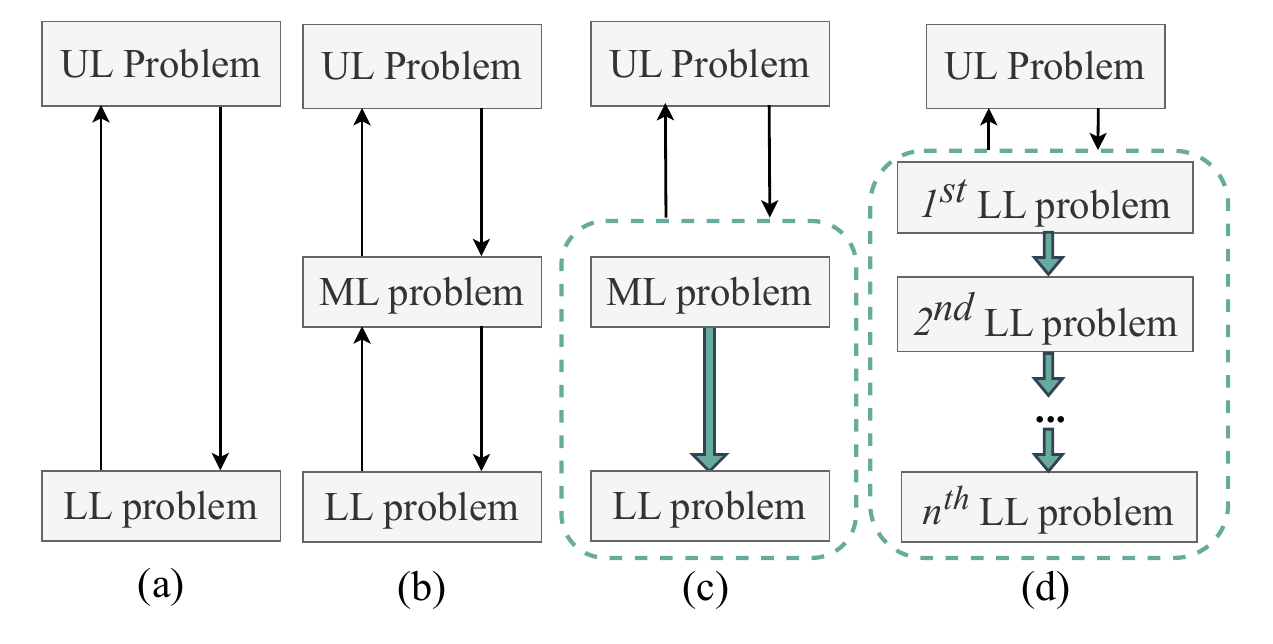}
    \caption{Different models of hierarchical decision making process. UL: Upper-Level, ML: Middle-Level, LL: Lower-Level.  (a) classic bi-level problem; (b) classic tri-level problem; (c) tri-level problem with sequential ML and LL problems; (d) multi-level problem with $n$ sequential LL problems.}
    \label{fig:framework}
\end{figure}
To mathematically formulate this hierarchical decision-making framework, multi-level optimization problems have been extensively studied. In classic bi-level programming, the structure comprises an upper-level leader and a lower-level follower, with the follower’s actions acting as constraints on the leader’s decisions, as represented in Fig.~\ref{fig:framework}(a). This bi-level optimization programming is extensively reviewed in the literature, see \citet{BECK2023401} and \citet{KLEINERT2021100007}. Multi-level programming, with more than two levels, has been gaining increasing attention due to its applicability to real-world problems \citep{LU2016463}. Multi-level programming can address nested hierarchies where multiple agents make decisions in sequence driven by individual objectives. As an example, the structure of a classic tri-level problem with a nested decision-making hierarchy is depicted in Fig.~\ref{fig:framework}(b) \citep{avraamidou2019}. In this classic tri-level optimization problem, the first decision-maker at the upper-level formulates an optimization problem whose constraints include the optimization problem of the second decision-maker. This second decision-maker's problem further incorporates the optimization problem of a third decision-maker in the constraints, making for a nested decision-making hierarchy \citep{avraamidou2019}. Recently, a special tri-level structure involving two sequential followers has been studied in \citet{8844828}, as shown in Fig.~\ref{fig:framework}(c). In this special framework, unlike the classic tri-level problem, the middle-level problem does not anticipate the solutions of the lower-level problem. This special tri-level optimization frameworks have been demonstrated in various power system applications. These include heat unit commitment problems with sequential heat-electricity markets \citep{mitridati2019bidvalidity}. Additionally, for applications involving sequential national and local market operations, this framework has been applied to both optimal bidding strategies for flexible service providers \citep{10151915} and merchant transmission investment \citep{XIA2025124721}.

Multi-level problems, even for the case of bi-levels, are recognized as strongly $\mathcal{NP}$-hard, presenting significant computational challenges \citep{FAKHRY20221114}. Unlike bi-level problems, tri-level and more complex multi-level problems often cannot be efficiently solved using traditional methods such as Karush–Kuhn–Tucker (KKT) conditions \citep{avraamidou2019}. To address complex multi-level problems, researchers have developed both decomposition methods and column-and-constraint generation approaches \citep{8636266,9521770,Veronika2019,7748594,7930419,FLORENSA2017449,9068456}. Additionally, multi-parametric and heuristic approaches offer alternative solutions \citep{avraamidou2019,ELMELIGY2021107274,FAKHRY20221114,CHOUHAN2022109468}. Note that for the specific tri-level problem with two sequential problems as shown in Fig.~\ref{fig:framework}(c), dedicated methods such as lexicographic asymptotic approximation and a specialized Benders decomposition have been proposed in \citet{8844828, doi:10.1287/ijoc.2021.1128}. To the best knowledge of the authors, there is a lack of efficient methods to solve multi-level problems involving sequential market operations. Additionally, the use of lexicographic and weighted-sum asymptotic approximation techniques introduces numerical scalability issues in multi-level optimization problems with multiple lower-level agents, as the weighted-sum scaling terms progressively approach zero with each subsequent lower level, leading to potential computational instability. Furthermore, classic Benders decomposition approaches face substantial computational challenges, particularly in handling complex mixed-integer problems. These challenges arise because Benders subproblems contain a large number of variables and constraints, making them computationally intensive to solve. 


Sequential decision-making is fundamental to the energy sector, encompassing temporal, spatial, operational, and hierarchical dimensions of market operations. For example, electricity markets are organized hierarchically and cleared sequentially, each operating at different \emph{temporal} resolutions ranging from months to minutes. This structure has been widely modeled in strategic bidding \citep{WOZABAL2020639, RINTAMAKI20201136, QIN20231227} and energy management \citep{9758051, 9492111}. Moreover, while markets for different energy sectors are \emph{operationally} independent, they are becoming increasingly interconnected. For example, although the electricity and natural gas markets typically operate independently, they are interconnected through gas-fired power plants (GFPPs). Currently, these two markets are cleared sequentially in the United States, and there is significant research interest in how their interdependence affects problems including generation expansion planning \citep{8395045}, strategic bidding \citep{DIMITRIADIS2023127710} and UC \citep{8844828}. Additionally, the sequence in which electricity and heat markets are cleared \citep{mitridati2019bidvalidity, MITRIDATI20201107}, as well as the design of carbon markets--which can be cleared either simultaneously or sequentially with the electricity market \citep{DIMITRIADIS2023127710, 9913314, 9380720}--will affect market outcomes. The \emph{temporal and operational} dimensions are also crucial for current market models, particularly impacting the coordination of day-ahead and real-time markets for electricity and natural gas. A range of coordination approaches have been proposed, ranging from sequentially decoupled to fully uncoordinated and purely stochastic approaches \citep{ORDOUDIS2019642, schwele2021coordination, 9492841}. In addition, recent research has increasingly focused on the \emph{spatial and temporal} dynamics of local electricity markets. Notably, \citet{KHAN2022107624} have developed a bidding strategy for producers participating in sequential market clearing at wholesale and retail levels. Moreover, the study in \citet{10151915} explores a sequential market clearing framework, which models the stacking of flexibility revenues of distributed energy resources in sequential national and local markets.

As previously discussed, GFPPs play a critical role in linking the electricity and gas markets. These plants are notable for providing environmental benefits, low capital costs, high efficiency, and operational flexibility \citep{7913721}. Nevertheless, GFPPs encounter significant challenges when these two markets operate independently \citep{8844828}. This is particularly problematic when GFPPs must bid in the electricity market without knowledge of actual gas prices (i.e., fuel costs), which can lead to substantial economic losses, as evidenced during the 2014 polar vortex \citep{7029695, 8844828, PJM2014ColdWeather} in the Northeastern United
States. Proposals to enhance coordination between electricity and gas networks have been suggested \citep{ORDOUDIS2019642, schwele2021coordination, 9492841}, yet these markets typically clear independently, heightening financial risks during high stresses for GFPPs on both networks. In response, a new bid-validity constraint has been introduced in \citet{8844828} to ensure that only profitable GFPPs can be committed, taking into account solely the gas costs (i.e., fuel costs).

The carbon cap-and-trade system, which enables the trading of emission allowances within regulatory constraints \citep{EU_ETS}, is a cost-effective mechanism for reducing greenhouse gas emissions and promoting decarbonization across the energy sector. Recent studies have focused on integrating carbon markets with traditional energy sectors, including electricity \citep{9999383,10330727,10530198,LI2024122328}, natural gas, and heat markets \citep{DIMITRIADIS2023127710,yang2022identifying,10293189}, to optimize environmental and economic outcomes. The introduction of the carbon market has redefined the roles of GFPPs, which traditionally participate as sellers in the electricity market and buyers in the gas market. Now, these entities also engage as either sellers or buyers in the carbon market, depending on their emission levels and allowances. Notably, the study by \citet{9380720} provides an analysis of GFPPs' participation in the electricity, gas, and carbon markets, focusing on market equilibria and the potential for market power exploitation by strategic energy producers. Despite advancements in modeling GFPPs' participation, there exists a critical research gap in the need for a UC model that integrates both carbon and gas economic feedback to prevent GFPP defaults and reduce total operational costs.

\begin{figure}[tp]
    \centering
    \includegraphics[width=1\columnwidth]{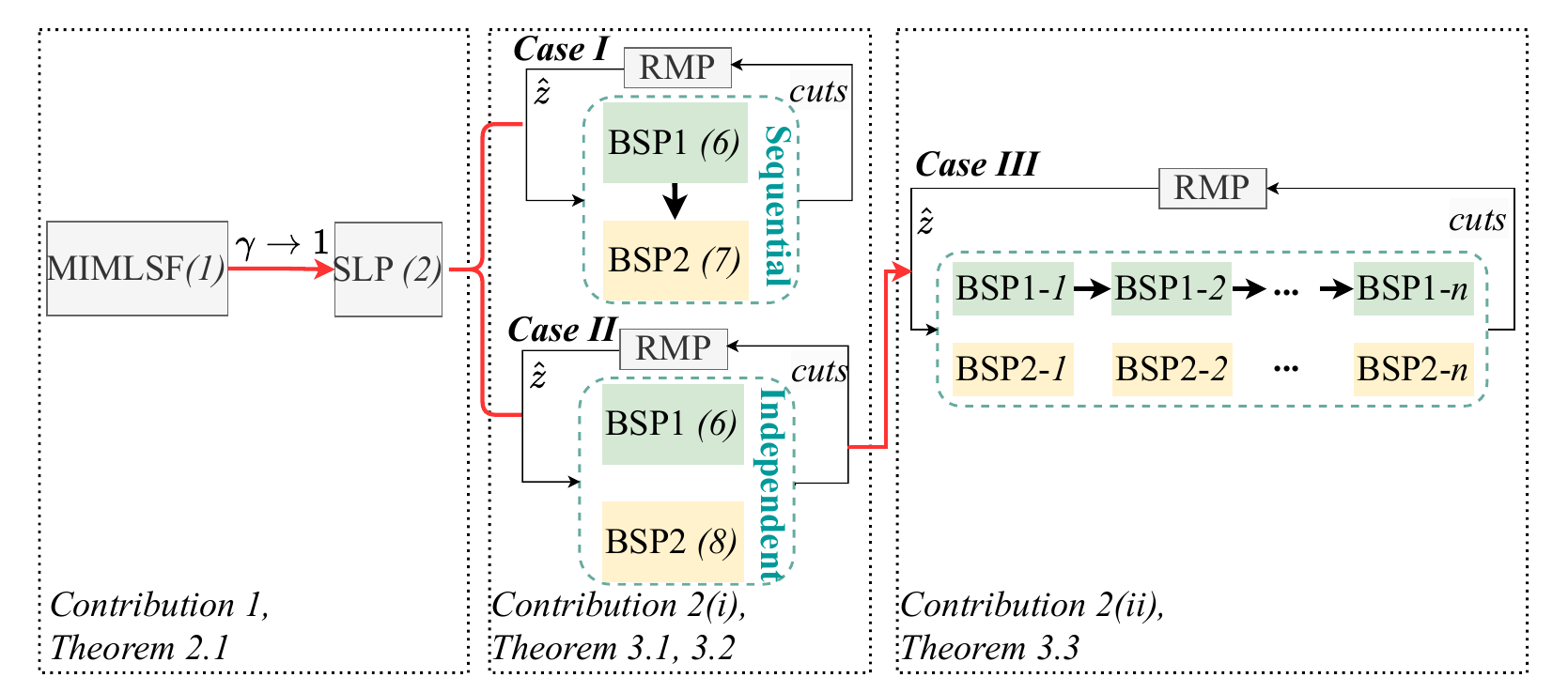}
    \caption{
    Theoretical Contributions. The equation numbers in parentheses denote the complete mathematical formulation of the optimization model. Case \texorpdfstring{\uppercase\expandafter{\romannumeral1}}{I}: general formulation; Case \texorpdfstring{\uppercase\expandafter{\romannumeral2}}{II}: coefficients in Problem~\eqref{eq:1} satisfy \(c_{x1}^T = c_1^T, c_{xi}^T = 0,  \forall i \in [n]^+\); Case \texorpdfstring{\uppercase\expandafter{\romannumeral3}}{III}: Case \texorpdfstring{\uppercase\expandafter{\romannumeral2}}{II} without the \textit{upper dual complicating constraints}~\eqref{eq:1:u4}. RMP: Relaxed Master Problem; BSP: Benders Subproblem.}
    \label{fig:structure}
\end{figure}

The novel contributions of this paper are threefold, and for clarity, Fig.~\ref{fig:structure} presents the theoretical novelty of this paper and each block in Fig.~\ref{fig:structure} corresponds to a key theoretical contribution:
\begin{enumerate}
    \item \textit{Single-level approximation of the MIMLSF problem}: We investigate the the Mixed-Integer Multi-Level problems with Sequential Followers (MIMLSF) (i.e., a $n$+1-level mixed-integer problem with $n$ sequential followers problem), as presented in Fig.~\ref{fig:framework}(d). We propose a lexicographic optimization combined with a weighted-sum approach to transform the MIMLSF problem into a single-level problem (SLP), which asymptotically approximates the original MIMLSF problem as the scaling factor $\gamma$ approaches 1, defined in Section~\ref{section2} (see the first block of Fig.~\ref{fig:structure}). 
    \item \textit{Dedicated Benders subproblem decomposition for the SLP}: We propose a dedicated Benders decomposition approach for the single-level MIMLSF approximation problem (i.e., SLP). The key contributions are: (i) proving that the complex Bender Subproblem (BSP) can be separated into two more tractable problems (BSP1 and BSP2), which can be solved either sequentially or independently, depending on the upper-level objectives (see the second block of Fig.~\ref{fig:structure}); and (ii) demonstrating that these separated BSPs can be further decomposed into $n$ problems while addressing numerical challenges from the scaling factor $\gamma$ (see the third block of Fig. \ref{fig:structure}).
    \item \textit{A practical MIMLSF case study on an integrated electricity-gas-carbon market framework}: We demonstrate our MIMLSF approach through a four-level Unit-Commitment with Gas and Carbon Awareness (UCGCA) case study in the Northeastern United States networks. By incorporating economic feedback from gas and carbon markets, the UCGCA model makes improved UC decisions that mitigate gas price spikes and prevent GFPP financial losses while reducing total operational costs compared to existing approaches. Computational results show that our proposed dedicated Benders decomposition method significantly outperforms direct solution by the Gurobi solver across all test instances.

\end{enumerate}

The rest of the paper is organized as follows: Section~\ref{section2} introduces the MIMLSF model and we prove that it can be asymptotically approximated as a SLP. Section~\ref{section3} presents the dedicated Benders decomposition with further decomposition on the Benders subproblems for the SLP. Section~\ref{section4} presents the UCGCA formulation and results for the UCGCA case studies are presented in Section~\ref{section5}. Lastly, Section~\ref{section6} concludes the paper.

\section{Single-Level Approximation of the MIMLSF Problem}\label{section2}

In this section, we present the general MIMLSF formulation and prove its asymptotic approximation to a single-level problem. This section corresponds the first block of Fig.~\ref{fig:structure}. In what follows, the notation $[n]$ represents the set $\{1, \ldots, n\}$, and $[n]^+$ denotes the set $\{2, \ldots, n\}$ for an integer $n \geq 2$, where $n$ indicates the number of sequential problems. The symbol $\mathbf{1}$ represents the vector of ones. Square brackets around variables indicate the dual variables associated with each constraint.

The MIMLSF problem, characterized as an $n$+1-level structure with $n$ sequential followers (where `1' is the upper level), is presented in Problem~\eqref{eq:1} and illustrated in Fig.~\ref{fig:framework}(d). To improve readability, the problem at level $i$+1 is referred to as the $i^{th}$ lower-level problem, where $i\in[n]$ indicates its sequential position. The term `\textit{lower-level}' is used to denote the position of $n$ sequential problems within the hierarchy of decision-making. We use boldface letters to represent vectors of variables, such as $\bm{x}_i$ and $\bm{y}_i$, where $\bm{x}_i$ indicates the primal variable vector of the $i^{th}$ lower-level problem, and $\bm{y}_i$ represents the dual variable vector:

\begin{subequations}
\centering
    \begin{align}
        &\textbf{MIMLSF:} \min_{\substack{\bm{z}\in\{0,1\}^{m},\\ \bm{x}_{1}\geq 0, \bm{y}_{1}\geq 0,...,\\\bm{x}_{n}\geq 0, \bm{y}_{n}\geq 0}}
         c_{z}^{T} \bm{z} + \sum_{i=1}^n c_{xi}^{T} \bm{x}_{i}\label{eq:1:uo} \\
        & \text{s.t.} \quad \bm{z} \in \mathcal{Z}, \bm{z} \in\{0,1\}^{m} \label{eq:1:u1} \\
        &  \qquad H_{xi}\bm{z} +G_{xi}\bm{x}_{i}\geq b_{xi}, \quad \forall i \in[n]\quad \label{eq:1:u2} \\
        &  \qquad H_{yi}\bm{z} +G_{yi}\bm{y}_{i}\geq b_{yi}, \quad \forall i \in[n]\quad \label{eq:1:u3} \\
        &  \qquad R_y\bm{z}  +\sum_{i=1}^n S_{yi}\bm{y}_{i}\geq q_y, \label{eq:1:u4} \\
        & \qquad(\bm{x}_{1}, \bm{y}_{1},\dots,\bm{x}_{n}, \bm{y}_{n}) =\argmin c_{1}^{T} \bm{x}_{1}\label{eq:1:o1} \\
        & \hspace{3cm}\text{s.t.} \quad A_{1} \bm{x}_{1} + D_{1} \bm{z}  \geq b_{1} \label{eq:1:p1} \\
        & \hspace{4cm} (\bm{x}_{2}, \bm{y}_{2},\dots,\bm{x}_{n}, \bm{y}_{n}) =\argmin c_{2}^{T} \bm{x}_{2}\label{eq:1:o2} \\
        & \hspace{6cm}\text{s.t.} \quad A_{2} \bm{x}_{2} + B_{2} \bm{x}_{1} +D_{2} \bm{z} \geq b_{2}\label{eq:1:p2} \\
        & \hspace{6.8cm} \dots\dots\dots\dots \notag \\
        & \hspace{8cm} \left(\bm{x}_{n},\bm{y}_{n}\right)=\argmin c_{n}^{T} \bm{x}_{n} \label{eq:1:on} \\
        & \hspace{8.5cm}\text{s.t.} \quad A_{n} \bm{x}_{n} + B_{n} \bm{x}_{n-1} +D_{n} \bm{z} \geq b_{n} \label{eq:1:pn}
    \end{align}
    \label{eq:1}
\end{subequations}

In the upper-level problem, the decision variables are modeled as an $m$-dimensional binary vector, $\bm{z}$, where $\bm{z} \in \{0,1\}^m$. The feasible region, denoted as $\mathcal{Z}$, encompasses all binary vectors of dimension $m$, where each component $z_i$ of the vector $\bm{z}$ can take a value of either 0 or 1. Furthermore, the vectors $\bm{x}_i \in \mathbb{R}^{n_{xi}}$ and $\bm{y}_i \in \mathbb{R}^{n_{yi}}$ are defined for each $i \in [n]$, representing the primal and dual variables of the $i^{th}$ lower-level problem, respectively. The mathematical representation of~\eqref{eq:1} with appropriate dimensions for any specific problem can be used to derive the vectors $(c_z, c_{xi},b_{xi},b_{yi},q_y,c_{i},b_i)$ and matrices $(H_{xi},G_{xi}, H_{yi},G_{yi},R_{y},S_{yi},A_i, B_{i^+},D_i)$, $\forall i \in [n]$, $\forall i^+ \in [n]^+$. The upper-level constraints~\eqref{eq:1:u2} and~\eqref{eq:1:u3} are defined as \emph{upper primal coupling constraints} and \emph{upper dual coupling constraints} for each $i \in [n]$, respectively. Moreover, constraint~\eqref{eq:1:u4} is defined as \emph{upper dual complicating constraints} as it accounts for the cumulative impact of the dual variables $\bm{y}_i$ from all lower-level problems. 

The binary decision variables $\bm{z}$ from the upper-level problem are passed onto $n$ sequential lower-level constraints, as suggested in constraints~\eqref{eq:1:p1},~\eqref{eq:1:p2}~\eqref{eq:1:pn}. In addition, the \emph{nomination} determined in the $(i-1)^{th}$ lower-level problem will be fed into the $i^{th}$ (next) lower-level problem and is modeled by the constraints for $i^{th}$ (next) lower-level problem $A_{i} \bm{x}_{i} + B_{i} \bm{x}_{i-1} +D_{i} \bm{z} \geq b_{i}$ where $i\in[n]^+$. Sequential followers framework requires that the $(i-1)^{th}$ lower-level problem does not anticipate the solutions of the $i^{th}$ (next) lower-level problem $(\bm{x}_{i},\bm{y}_{i})$ since constraints and objective function of the $(i-1)^{th}$ lower-level problem are not dependent on the variables of the $i^{th}$ lower-level problem. 


Without loss of generality, we present the lower-level problems in~\eqref{eq:1} with linear constraints. However, the models can be extended to convex lower-level problems (e.g., second-order cone programming (SOCP)) by replacing the linear constraints with appropriate conic forms. This extension is particularly relevant for applications in radial distribution networks \citep{SAVELLI2021102450,XIA2025124721} and gas network problems \citep{8844828}.
We make the following assumptions throughout this paper:
\begin{assumption}\label{assump3}
The following problem is feasible and bounded.
    \begin{align}
&\underset{\substack{\bm{z} \in\{0,1\}^{m},\bm{z}  \in \mathcal{Z},\\\bm{x}_{1}\geq 0, \bm{y}_{1}\geq 0,..,\\\bm{x}_{n}\geq 0, \bm{y}_{n}\geq 0}}{\min } \quad  c_z^{T} \bm{z} +\sum_{i=1}^n c_{xi}^{T} \bm{x}_{i}\\
& \text{s.t.} \quad Eqs.~\eqref{eq:1:u2},~\eqref{eq:1:u3},~\eqref{eq:1:u4},~\eqref{eq:1:p1},~\eqref{eq:1:p2},~\eqref{eq:1:pn}\text{: All primal constraints}\notag 
    \end{align}
\end{assumption}
Assumption~\ref{assump3} provides a lower bound on the optimal objective value of Problem~\eqref{eq:1} by relaxing the optimality of the sequential lower-level problems. 

\begin{assumption}\label{ass1}
For every upper-level decision $\hat{\bm{z}}$, Slater’s constraint qualification holds for the convex sequential lower-level problems \eqref{eq:1:o1}--\eqref{eq:1:pn}.
\end{assumption}
Assumption \ref{ass1} enables a dual-based reformulation of Problem~\eqref{eq:1} through strong duality of the sequential lower-level problems, providing one approach to obtaining global optimal solutions. This can be accomplished by incorporating the primal feasibility constraints, along with the dual feasibility conditions and the strong duality condition, into the problem formulation.
\color{black}
\begin{theorem}\label{theorem1}
The MIMLSF problem \eqref{eq:1} can be asymptotically approximated by the following single-level problem (SLP)~\eqref{eq:2} for some $\gamma \in (0,1)$. Moreover, when $\gamma \rightarrow 1$, the optimal solution of problem \eqref{eq:2} converges to the optimal solution of problem \eqref{eq:1}. In SLP~\eqref{eq:2}, the scaling factor applied for the $i^{th}$ lower-level/sequential problem is denoted by ${\gamma}_i$, with ${\gamma}_1=\gamma$, ${\gamma}_2=\gamma(1-\gamma)$, and continuing as ${\gamma}_n=(1-\gamma)^{n-1}$. The McCormick relaxation technique is applied to linearize the terms $\bm{y}_{i}^{T}D_{i}\bm{z}$ with auxiliary variables $\bm{s}_{yi}^{T}\mathbf{1} = \bm{y}_{i}^{T}D_{i}\bm{z}$ with additional constraints~\eqref{eq:2:linear}. The terms enclosed in squared brackets are dual variables.
\begin{subequations}
    \begin{align}
        &\textbf{SLP: }\min_{\substack{\bm{z}\in\{0,1\}^{m},\\ \bm{x}\geq 0, \bm{y}\geq 0,\\\bm{u}_{x}\geq 0, \bm{u}_{y}\geq 0,\\\bm{u}\geq 0,\bm{s}_{y}\geq 0,\\ \bm{w}\geq 0,\bm{v}_{y}\geq 0}} \gamma c_z^{T} \bm{z}  + \sum_{i=1}^n \gamma c_{xi}^{T} \bm{x}_{i} 
        \label{eq:2:uo} \\
        & \text{s.t.} \quad \bm{z}  \in \mathcal{Z}, \bm{z} \in\{0,1\}^{m}  \label{eq:2:u1} \\
         &  \qquad H_{xi}\bm{z}  +G_{xi}\bm{x}_{i}\geq b_{xi},\quad  \forall i \in[n] \quad\left[\bm{u}_{xi}\geq 0\right]  \label{eq:21:u2} \\
        &  \qquad  \gamma_i H_{yi}\bm{z}  +G_{yi}\bm{y}_{i}\geq \gamma_i b_{yi},\quad \forall i \in[n]\quad\left[\bm{u}_{yi}\geq 0\right] \label{eq:2:u3} \\
        &   \qquad  R_{y}\bm{z}  +\sum_{i=1}^n S_{yi}\frac{\bm{y}_{i}}{\gamma_i}\geq q_{y}, \quad  \forall i \in[n]\quad \left[\bm{u}\geq 0\right] \label{eq:2:u4} \\
        &  \qquad  A_{1} \bm{x}_{1} + D_{1} \bm{z}  \geq b_{1}, \quad \left[\bm{y}_1\geq 0\right] \label{eq:2:p1} \\
        &  \qquad A_{i} \bm{x}_{i} + B_{i} \bm{x}_{i-1} +D_{i} \bm{z} \geq b_{i},\quad  \forall i \in[n]^+  \quad \left[\bm{y}_i\geq 0\right] \label{eq:2:p2}\\
        &  \qquad \bm{y}_{i}^{T}A_{i} +\bm{y}_{i+1}^{T}B_{i+1}  \leq\gamma_i c_{i}^T,\quad  \forall i \in[n-1] \quad \left[\bm{x}_{i}\geq 0\right] \label{eq:2:d1} \\
        &  \qquad \bm{y}_{n}^{T}A_{n}\leq \gamma_n c_{n}^T,\quad \left[\bm{x}_{n}\geq 0\right] \label{eq:2:d2}\\
        & \qquad \sum_{i=1}^n\bigg(\bm{y}_{i}^{T}b_i-\bm{s}_{yi}^{T}\mathbf{1}\bigg)\geq \sum_{i=1}^n \gamma_i c_i^T \bm{x}_{i} \quad\left[\bm{w}\geq 0\right]\label{eq:2:sd}\\
        &  \qquad Y_{yi}\bm{y}_{i}+ K_{yi}\bm{s}_{yi}\geq {\gamma}_i(e_{yi}+E_{yi}\bm{z}),\quad \forall i\in [n] \quad \left[\bm{v}_{yi}\geq 0\right]\label{eq:2:linear}
    \end{align}
    \label{eq:2}
\end{subequations}
\end{theorem}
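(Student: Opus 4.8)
The plan is to eliminate the nested sequential followers in Problem~\eqref{eq:1} by replacing the constraint ``$(\bm{x}_1,\bm{y}_1,\dots,\bm{x}_n,\bm{y}_n)$ solves the sequential lower-level problems~\eqref{eq:1:o1}--\eqref{eq:1:pn}'' with a finite algebraic system in the upper-level variables, and then showing that this system is exactly \eqref{eq:2:u3}--\eqref{eq:2:linear} once the multipliers are rescaled. First I would exploit the defining feature of the sequential-followers structure: because the objective and constraints of the $(i-1)^{th}$ problem do not involve $(\bm{x}_i,\bm{y}_i)$, for each fixed $\bm{z}$ the collection~\eqref{eq:1:o1}--\eqref{eq:1:pn} is precisely a \emph{lexicographic} linear program that minimizes the vector $(c_1^{T}\bm{x}_1,\,c_2^{T}\bm{x}_2,\dots,c_n^{T}\bm{x}_n)$ in that priority order over the jointly feasible polyhedron cut out by~\eqref{eq:2:p1}--\eqref{eq:2:p2}. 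I would establish this equivalence by induction on the level index, using the triangular dependence to argue that an optimal $\bm{x}_{i-1}$ is never sacrificed for the benefit of a strictly later level.

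Second, I would approximate this lexicographic program by the single weighted-sum linear program $\min\sum_{i=1}^{n}\gamma_i c_i^{T}\bm{x}_i$ over the same polyhedron, with the geometric weights of the statement. The key arithmetic facts are that these weights form a convex combination, $\sum_{i=1}^{n}\gamma_i=1$ (which is exactly why $\gamma_n=(1-\gamma)^{n-1}$ carries no leading $\gamma$), while every successive ratio $\gamma_{i+1}/\gamma_i$ is of order $1-\gamma$. Hence as $\gamma\to1$ the priorities separate geometrically and the weighted-sum minimizer is driven onto the lexicographic optimal face. Because $\bm{z}$ ranges over the finite set $\{0,1\}^{m}$ and each lower-level polyhedron has finitely many vertices, I would extract a single threshold $\bar\gamma<1$, uniform in $\bm{z}$, beyond which every weighted-sum optimizer is also lexicographically optimal; this finiteness is the device that turns ``asymptotic approximation'' into genuine convergence as $\gamma\to1$. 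Assumption~\ref{assump3} supplies the nonemptiness and boundedness needed for these optima to exist along the whole family.

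Third, under Assumption~\ref{ass1} Slater's condition yields strong duality for the weighted-sum linear program, so I would replace ``$(\bm{x}_i)_i$ solves the weighted-sum program'' by the equivalent trio of primal feasibility~\eqref{eq:2:p1}--\eqref{eq:2:p2}, dual feasibility, and an equal-objective cut. In the dual, the variable $\bm{x}_i$ enters its own row through $A_i$ and the next row through $B_{i+1}$, which reproduces exactly the coupled dual-feasibility inequalities~\eqref{eq:2:d1}--\eqref{eq:2:d2}, while equating the primal and dual objectives gives~\eqref{eq:2:sd}. The multipliers generated this way are the $\gamma_i$-scaled shadow prices; substituting $\bm{y}_i^{\mathrm{orig}}=\bm{y}_i/\gamma_i$ into the upper-level price constraints~\eqref{eq:1:u3} and~\eqref{eq:1:u4} returns their scaled forms~\eqref{eq:2:u3} and~\eqref{eq:2:u4}, and multiplying~\eqref{eq:1:uo} through by $\gamma_1=\gamma$ produces~\eqref{eq:2:uo}. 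The only remaining nonlinearity is the bilinear $\bm{y}_i^{T}D_i\bm{z}$ in the strong-duality cut; since $\bm{z}$ is binary, I would linearize it \emph{exactly} with McCormick envelopes, introducing $\bm{s}_{yi}$ with $\bm{s}_{yi}^{T}\mathbf{1}=\bm{y}_i^{T}D_i\bm{z}$ and the envelope inequalities~\eqref{eq:2:linear}, so that~\eqref{eq:2:sd} becomes the strong-duality cut with $\bm{s}_{yi}^{T}\mathbf{1}$ substituted for $\bm{y}_i^{T}D_i\bm{z}$.

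I expect the main obstacle to be the final transfer step, rather than any of the individual reformulations. The subtlety is that the weighted-sum-to-lexicographic equivalence is proved \emph{pointwise} in $\bm{z}$, yet it must survive being embedded as an inner constraint of the upper-level minimization, where the lower-level response also feeds the upper objective $\sum_i c_{xi}^{T}\bm{x}_i$ and the price constraints. I would therefore need a stability argument showing that no family of upper-level decisions can exploit the small tie-breaking slack of size $1-\gamma$ to gain objective value in the limit, and that the rescaled multipliers $\bm{y}_i/\gamma_i$ converge to admissible lower-level dual vectors (the coupling term $B_{i+1}$ then carries a vanishing factor of order $1-\gamma$, decoupling the levels). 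Combining the uniform threshold $\bar\gamma$ with the finiteness of $\{0,1\}^{m}$ lets me conclude that the feasible set of~\eqref{eq:2} converges to the graph of the true sequential best-response map, and hence that the minimizers of~\eqref{eq:2} converge to those of~\eqref{eq:1} as $\gamma\to1$.
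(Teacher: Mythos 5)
Your proposal is correct and follows essentially the same route as the paper's proof: reading the sequential followers (for fixed $\bm{z}$) as a lexicographic program, scalarizing it with the geometric weights $\gamma_i$ (which indeed sum to one and separate the priorities as $\gamma\to 1$), replacing optimality by primal feasibility, $\gamma_i$-scaled dual feasibility and the strong-duality cut under Assumptions~\ref{assump3} and~\ref{ass1}, rescaling the multipliers as $\bm{y}_i/\gamma_i$ in the upper dual constraints, and linearizing the binary--continuous products $\bm{y}_i^T D_i\bm{z}$ exactly via McCormick. Your closing remarks on the uniform threshold $\bar\gamma$ (from finiteness of $\{0,1\}^m$ and of the lower-level vertices) and on the $O(1-\gamma)$ decoupling of the $B_{i+1}$ terms are precisely the points the paper's argument also has to handle.
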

\begin{proof}
Please refer to Appendix A of the supplementary material.
\end{proof}

\section{Dedicated Benders Decomposition with Multi-level Subproblem Separability}\label{section3}
In this section, we propose a dedicated Benders decomposition approach with enhanced subproblem separability to effectively solving the single-level problem~\eqref{eq:2}. This section relates to the second and the third blocks of Fig.~\ref{fig:structure}. While standard solvers may be capable of solving the single-level approximation of a tri-level model with sequential middle and lower levels of moderate complexity (i.e., $n=2$), as indicated in \citet{XIA2025124721,10151915}, they might struggle when addressing more intricate or larger-scale problems. In this case, Benders decomposition offers an alternative for solving the large-scale complex SLP~\eqref{eq:2}. 

Benders decomposition involves iteratively solving a Relaxed Master Problem (RMP) with binary variables and a Benders Subproblem (BSP) with continuous variables to find the optimal solution of the original problem by introducing cuts based on the feasibility and optimality of subproblems until the upper and lower bounds converge sufficiently. For a detailed review of Benders decomposition, see \citet{RAHMANIANI2017801}. To facilitate the application of Benders decomposition,~\eqref{eq:2} is initially reformulated as follows with a given $\bm{\hat{z}}$:\begin{subequations}
    \begin{align}
        &\min_{\bm{\hat{z}}}
      \quad  \gamma c_z^{T} \bm{\hat{z}} + f(\bm{\hat{z}}) \label{eq:3:1} \\
        & \text{s.t.} \quad \bm{\hat{z}}\in \mathcal{Z}, \bm{\hat{z}} \in\{0,1\}^{m}   \label{eq:3:2}
    \end{align}
    \label{eq:3}
\end{subequations}
where $f(\bm{\hat{z}})$ is defined as 
\begin{subequations}
    \begin{align}
        & f(\bm{\hat{z}}):=\min_{\substack{ \bm{x}\geq 0, \bm{y}\geq 0,\\\bm{u}_{x}\geq 0, \bm{u}_{y}\geq 0,\bm{u}\geq 0,\\\bm{s}_{y}\geq 0, \bm{w}\geq 0,\bm{v}_{y}\geq 0}}
        \sum_{i=1}^n \gamma c_{xi}^{T} \bm{x}_{i} \label{eq:4:1} \\
        & \text{s.t.} \quad \text{Eq. \eqref{eq:21:u2}}- \text{Eq. \eqref{eq:2:linear}}\textit{ with $\bm{z}$ replaced by $\hat{\bm{z}}$}  \label{eq:4:2} 
    \end{align}
    \label{eq:4}
\end{subequations}
For a guess $\bm{\hat{z}}$, then the corresponding BSP of Problem~\eqref{eq:2} is derived by taking the dual of Problem~\eqref{eq:4}:
\begin{subequations}
    \begin{align}
        &\textbf{BSP:}\max_{\substack{ \bm{x}\geq 0, \bm{y}\geq 0, \bm{v}_y\geq 0, \\
        \bm{w}\geq 0, \bm{u}\geq 0, \bm{u}_x\geq 0, \\
        \bm{u}_y\geq 0, \bm{s}_y\geq 0 }}
         \sum_{i=1}^n \left(\bm{y}_{i}^{T}(b_i-D_{i}\bm{\hat{z}})+\bm{u}_{xi}^T(b_{xi}-H_{xi}\bm{\hat{z}})\right)\notag \\
         &\qquad\qquad\qquad-\bigg[\sum_{i=1}^n\left(\gamma_i c_{i}^T \bm{x}_{i} -\gamma_i \bm{u}_{yi}^T(b_{yi}-H_{yi}\bm{\hat{z}}) -\gamma_i v_{yi}^T(e_{yi}+E_{yi}\bm{\hat{z}})\right)-\bm{u}^T(q_{y}-R_{y}\bm{\hat{z}}) \bigg]\label{eq:5:o1}\\
        & \text{s.t.} \quad \bm{y}_{i}^{T}A_{i} +\bm{y}_{i+1}^{T}B_{i+1}+ G_{xi}^T\bm{u}_{xi} \leq \gamma_i c_{i}^T \bm{w} + \gamma c_{xi}^T,\quad \forall i \in [n-1]\label{eq:5:y1}\\
        &\qquad \bm{y}_{n}^{T}A_{n}+ G_{xn}^T\bm{u}_{xn} \leq \gamma_n c_{n}^T \bm{w}+\gamma c_{xn}^T,\label{eq:5:y2}\\
        &\qquad A_{1} \bm{x}_{1} -G_{y1}^T\bm{u}_{y1} -Y_{y1}^T\bm{v}_{y1}-S_{y1}^T\bm{u}/\gamma_1\geq b_{1}\bm{w}, \label{eq:5:x1} \\
        & \qquad A_{i} \bm{x}_{i} + B_{i} \bm{x}_{i-1} -G_{yi}^T\bm{u}_{yi} -Y_{yi}^T\bm{v}_{yi}-S_{yi}^T\bm{u}/\gamma_i\geq b_{i}\bm{w},\quad \forall i \in [n]^+ \label{eq:5:x2}\\
        & \qquad K_{yi}^T\bm{v}_{yi}\leq \bm{w}, \quad \forall i\in[n]\label{eq:5:v1}
    \end{align}
    \label{eq:5}
\end{subequations}

While the single-level problem \eqref{eq:2} can be iteratively solved using the RMP and the BSP \eqref{eq:5}, the BSP \eqref{eq:5} is computationally intensive due to its inclusion of both primal variables $\bm{x}_{i}$ related constraints (see \eqref{eq:5:x1} and \eqref{eq:5:x2}) and dual variables $\bm{y}_{i}$ related constraints (see \eqref{eq:5:y1} and \eqref{eq:5:y2}). Additionally, the complexity of the BSP \eqref{eq:5} increases with the number of lower-level problems $n$. Note that the strong duality condition \eqref{eq:2:sd} serves as a complicating constraint in the SLP \eqref{eq:2}, linking $n$ sequential lower-level problems. Consequently, its corresponding dual variable $\bm{w}$ acts as a linking variable in the BSP \eqref{eq:5}, see the right-hand side of each constraint in~\eqref{eq:5}. In the following subsections, we aim to present solution techniques for the complex BSP~\eqref{eq:5}, addressing the following three different cases that involve different upper-level objectives~\eqref{eq:1:uo} and constraint requirements on~\eqref{eq:1:u4}, as shown in Table~\ref{tab:cases_summary}:
\begin{table}[h]
\centering
\caption{Summary of Cases and their Descriptions}
\label{tab:cases_summary}
\renewcommand{\arraystretch}{1}
\begin{tabular}{cccc}
\hline
Case & Upper-level Objective Requirements& Constraint Requirements & Extra Notes \\ \hline
I & \(c_z^{T} \bm{z} + \sum_{i=1}^n c_{xi}^{T} \bm{x}_{i}\)  & None & None \\ 
II & \(c_z^{T} \bm{z} + c_1^{T} \bm{x}_1\) & None & Special case of Case I \\ 
III & \(c_z^{T} \bm{z} + c_1^{T} \bm{x}_1\)  & No \eqref{eq:1:u4} & Special case of Case II \\ \hline
\end{tabular}
\end{table}


\subsection{Benders Subproblem Decomposition for Case  \texorpdfstring{\uppercase\expandafter{\romannumeral1}}{I}}\label{section3a}
This subsection discusses the methodologies outlined in the upper second blocks of Fig.~\ref{fig:structure} for Case  \texorpdfstring{\uppercase\expandafter{\romannumeral1}}{I}. To further decompose the Benders subproblem~\eqref{eq:5}, this paper adopts the Bender's Separation Scheme as proposed in \citet{doi:10.1287/ijoc.2021.1128}, extending its application to multi-level problems. The idea is to prove that the complex BSP~\eqref{eq:5} can be solved by solving two relatively tractable problems sequentially. In other words, Benders cuts of~\eqref{eq:2} can be obtained by solving two more tractable problems.
\begin{theorem}\label{theorem2}
Benders subproblem~\eqref{eq:5} can be solved by two relatively tractable problems sequentially. First, solve problem~\eqref{eq:6}
\begin{subequations}
    \begin{align}
        &\textbf{BSP1:} \qquad \min_{\substack{ \bm{x}\geq 0,\bm{u}_y\geq 0,\\ \bm{v}_y\geq 0,\bm{u}\geq 0}}
        \sum_{i=1}^n \left(\gamma_i c_{i}^T \bm{x}_{i}-\gamma_i \bm{u}_{yi}^T(b_{yi}-H_{yi}\bm{\hat{z}})-\gamma_i \bm{v}_{yi}^T(e_{yi}+E_{yi}\bm{\hat{z}}) \right)-\bm{u}^T(q_{y}-R_{y}\bm{\hat{z}})\label{eq:6:o1} \\
        & \text{s.t.}\quad A_{1} \bm{x}_{1} -G_{y1}^T\bm{u}_{y1} -Y_{y1}^T\bm{v}_{y1}-S_{yi}^T\bm{u}/\gamma_1\geq b_{1} \label{eq:6:x1} \\
        & \qquad A_{i} \bm{x}_{i} + B_{i} \bm{x}_{i-1} -G_{yi}^T\bm{u}_{yi} -Y_{yi}^T\bm{v}_{yi}-S_{yi}^T\bm{u}/\gamma_i\geq b_{i},\quad\forall i \in [n]^+ \label{eq:6:x2}\\
        & \qquad K_{yi}^T\bm{v}_{yi}\leq \bm{1}, \quad i\in[n]\label{eq:6:v1}
    \end{align}
    \label{eq:6}
\end{subequations}
Then, solve problem~\eqref{eq:7} where $\mathfrak{D}_6$ is the optimal value of problem~\eqref{eq:6} if it has a finite optimum, $\infty$ otherwise (i.e. by fixing $\bm{w}=0$):
\begin{subequations}
    \begin{align}
        &\textbf{BSP2:} \qquad \max_{\substack{ \bm{y}\geq 0,\bm{u}_{x}\geq 0,\\\bm{w}\geq 0}}
         \sum_{i=1}^n \left(\bm{y}_{i}^{T}(b_i-D_{i}\bm{\hat{z}})+\bm{u}_{xi}^T(b_{xi}-H_{xi}\bm{\hat{z}})\right)-\bm{w}\mathfrak{D}_6 \label{eq:7:o1} \\
        & \text{s.t.} \quad\bm{y}_{i}^{T}A_{i} +\bm{y}_{i+1}^{T}B_{i+1}+ G_{xi}^T\bm{u}_{xi} \leq \gamma_i c_{i}^T\bm{w} + \gamma c_{xi}^T ,\quad \forall i \in [n-1]\label{eq:7:y1} \\
        & \qquad \bm{y}_{n}^{T}A_{n}+ G_{xn}^T\bm{u}_{xn}\leq \gamma_n c_{n}^T\bm{w} + \gamma c_{xn}^T \label{eq:7:yn}
    \end{align}
    \label{eq:7}
\end{subequations}
\end{theorem}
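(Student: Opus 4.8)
The plan is to exploit the fact that the only coupling between the two variable groups in the BSP~\eqref{eq:5} is the \emph{scalar} dual variable $\bm{w}$ attached to the single strong-duality constraint~\eqref{eq:2:sd}. First I would partition the BSP variables into a ``dual block'' $(\bm{y},\bm{u}_x)$, which appears only in the objective terms $\sum_{i}(\bm{y}_i^{T}(b_i-D_i\bm{\hat{z}})+\bm{u}_{xi}^{T}(b_{xi}-H_{xi}\bm{\hat{z}}))$ and in constraints~\eqref{eq:5:y1}--\eqref{eq:5:y2}, and a ``primal block'' $(\bm{x},\bm{u}_y,\bm{v}_y,\bm{u})$, which appears only in the bracketed part of the objective~\eqref{eq:5:o1} and in constraints~\eqref{eq:5:x1}--\eqref{eq:5:v1}. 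Since the objective is additively separable across the two blocks and no constraint mixes them, for each fixed $\bm{w}$ the BSP splits into two independent optimizations; I would write $\mathrm{BSP}=\max_{\bm{w}\geq 0}[\Phi_y(\bm{w})+\Phi_x(\bm{w})]$, where $\Phi_y(\bm{w})$ is the dual-block maximum over~\eqref{eq:5:y1}--\eqref{eq:5:y2} and $\Phi_x(\bm{w})$ is the primal-block maximum over~\eqref{eq:5:x1}--\eqref{eq:5:v1}.

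The key step is a homogeneity argument for the primal block. In~\eqref{eq:5:x1},~\eqref{eq:5:x2} and~\eqref{eq:5:v1} the scalar $\bm{w}$ enters only as a positive scaling of the right-hand sides ($b_i\bm{w}$ and the bound $\leq\bm{w}$), while the primal-block objective is linear and homogeneous of degree one in $(\bm{x},\bm{u}_y,\bm{v}_y,\bm{u})$. Hence for $\bm{w}>0$ the substitution $(\bm{x},\bm{u}_y,\bm{v}_y,\bm{u})\mapsto\bm{w}\,(\bm{x},\bm{u}_y,\bm{v}_y,\bm{u})$ maps the feasible region at $\bm{w}=1$ bijectively onto the region at $\bm{w}$ and scales the objective by $\bm{w}$. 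I would then observe that the primal block at $\bm{w}=1$ is exactly the negation of BSP1~\eqref{eq:6} (maximizing the negated objective equals minimizing~\eqref{eq:6:o1} over the identical feasible set), so $\Phi_x(1)=-\mathfrak{D}_6$ and therefore $\Phi_x(\bm{w})=-\bm{w}\,\mathfrak{D}_6$ for all $\bm{w}>0$. Substituting this linear-in-$\bm{w}$ value back collapses the entire primal block into the single term $-\bm{w}\mathfrak{D}_6$, leaving $\max_{\bm{y},\bm{u}_x,\bm{w}\geq 0}[(\text{dual-block objective})-\bm{w}\mathfrak{D}_6]$ subject to~\eqref{eq:5:y1}--\eqref{eq:5:y2}, which is precisely BSP2~\eqref{eq:7}. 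This establishes $\mathrm{BSP}=\mathrm{BSP2}$ and hence the sequential solution scheme: compute $\mathfrak{D}_6$ from~\eqref{eq:6}, then solve~\eqref{eq:7}.

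The main obstacle I expect is the careful treatment of the boundary $\bm{w}=0$ and of the cases where $\mathfrak{D}_6$ is not finite, since the clean scaling identity is established only for $\bm{w}>0$. To close this gap I would argue via the recession cone of the primal-block feasible set: the feasible region at $\bm{w}=0$ is exactly that homogeneous cone (which always contains the origin, so the primal block is feasible at $\bm{w}=0$), and BSP1 fails to attain a finite minimum along an improving ray precisely when the negated primal-block objective is positive somewhere on this cone, i.e.\ when $\Phi_x(0)=+\infty$. Thus when $\mathfrak{D}_6$ is finite no such direction exists, giving $\Phi_x(0)=0$ and extending $\Phi_x(\bm{w})=-\bm{w}\mathfrak{D}_6$ continuously to $\bm{w}=0$; when BSP1 is infeasible ($\mathfrak{D}_6=\infty$), the primal block is infeasible for every $\bm{w}>0$ by the same scaling, so the outer maximization is forced to $\bm{w}=0$, which is exactly the prescription to fix $\bm{w}=0$ in BSP2. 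I would finally note that this value-preserving equivalence, together with the standing feasibility and boundedness assumptions, is all that Benders decomposition requires: an optimal (or unbounded-ray) solution of BSP2 combined with the associated BSP1 solution reconstructs an optimal (respectively extreme-ray) solution of the original BSP~\eqref{eq:5}, so the optimality and feasibility cuts generated by the separated scheme coincide with those of the monolithic subproblem.
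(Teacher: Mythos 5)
Your proposal is correct and follows essentially the same route as the paper's proof, namely the Benders separation scheme of Byeon and Van Hentenryck that the paper explicitly adopts: the scalar dual $\bm{w}$ of the strong-duality constraint~\eqref{eq:2:sd} is the only link between the two variable blocks of~\eqref{eq:5}, the primal block is positively homogeneous in $\bm{w}$ so its optimal value collapses to $-\bm{w}\mathfrak{D}_6$, and substituting this back yields~\eqref{eq:7}. Your recession-cone treatment of the boundary $\bm{w}=0$ and of an infeasible BSP1 is consistent with the theorem's prescription to fix $\bm{w}=0$ when $\mathfrak{D}_6=\infty$.
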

\begin{proof}
Please refer to Appendix B of the supplementary material.
\end{proof}
Theorem~\ref{theorem2} suggests that the BSP~\eqref{eq:5} can be solved by two tractable problems~\eqref{eq:6} and~\eqref{eq:7}, and the RMP with feasibility and optimality cuts are given in Corollary 1 and the dedicated Benders decomposition algorithm is presented in Algorithm 1. For further details on Corollary 1 and Algorithm 1, please see Appendix C in the supplementary material.


\subsection{Benders Subproblem Decomposition for Case  \texorpdfstring{\uppercase\expandafter{\romannumeral2}}{II}}\label{section3b}
In this section, we examine a specific instance of Case  \texorpdfstring{\uppercase\expandafter{\romannumeral1}}{I}, referred to as Case \texorpdfstring{\uppercase\expandafter{\romannumeral2}}{II}, which allows for a significantly strong alternative to Theorem~\ref{theorem1}. This subsection details the methodologies presented in the lower second blocks of Fig.~\ref{fig:structure} for Case \texorpdfstring{\uppercase\expandafter{\romannumeral2}}{II}. We examine a special configuration of the upper-level objectives represented by $ c_z^T \bm{z} +  c_1^T \bm{x}_1$, i.e., \(c_{x1}^T = c_1^T\), \(c_{xi}^T = 0\) for all \(i \in [n]^+\). These conditions can be approximated as \(c_{xi}^T = \gamma_i c_{i}^T\) as \(\gamma\) approaches 1. This special configuration is adopted by the UCGCA model introduced in the next section. We denote the BSP under this special condition is BSP'~\eqref{eq:5}' where the RHS of constraints~\eqref{eq:5:y1} and~\eqref{eq:5:y2} becomes $\gamma_i c_{i}^T (\bm{w} +1)$, $\forall i\in[n]$. Building on \citet{doi:10.1287/ijoc.2021.1128}, we prove that, unlike the approach specified in Theorem~\ref{theorem1} which involves solving two problems \textit{sequentially}, the BSP'~\eqref{eq:5}' can be addressed through two \textit{independent} problems.
\begin{theorem}\label{theorem3}
BSP'~\eqref{eq:5}' can be solved by two relatively tractable problems~\eqref{eq:6} and~\eqref{eq:8} independently:
\begin{subequations}
    \begin{align}
        &\textbf{BSP2:} \qquad \max_{\substack{ \bm{y}\geq 0,\bm{u}_{x}\geq 0}}
         \sum_{i=1}^n \bm{y}_{i}^{T}(b_i-D_{i}\bm{\hat{z}})+\bm{u}_{xi}^T(b_{xi}-H_{xi}\bm{\hat{z}})\label{eq:8:o1} \\
        & \text{s.t.}  \hspace{0.5em}\bm{y}_{i}^{T}A_{i} +\bm{y}_{i+1}^{T}B_{i+1}+ G_{xi}^T\bm{u}_{xi} \leq \gamma_i c_{i}^T ,\forall i\in [n-1] \label{eq:8:y1} \\
        & \qquad \bm{y}_{n}^{T}A_{n}+ G_{xn}^T\bm{u}_{xn}\leq \gamma_n c_{n}^T \label{eq:8:yn}
    \end{align}
    \label{eq:8}
\end{subequations}
\end{theorem}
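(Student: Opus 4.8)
The plan is to build directly on Theorem~\ref{theorem2}, which already reduces the general BSP~\eqref{eq:5} to first solving BSP1~\eqref{eq:6} for its optimal value $\mathfrak{D}_6$ and then solving the reduced problem~\eqref{eq:7} over $(\bm{y},\bm{u}_x,\bm{w})$. Imposing Case~II changes only the right-hand sides of the $\bm{y}$-constraints, so that~\eqref{eq:7:y1}--\eqref{eq:7:yn} carry $\gamma_i c_i^T(\bm{w}+1)$ on the right, exactly as in the definition of BSP$'$; call this reduced problem BSP2$'$. The whole claim then collapses to showing that BSP2$'$ attains its optimum at $\bm{w}=0$ with value equal to the optimum $\mathfrak{D}_8$ of~\eqref{eq:8}, so that $\mathfrak{D}_6$ never enters the objective of the $(\bm{y},\bm{u}_x)$-problem and~\eqref{eq:6} and~\eqref{eq:8} decouple.

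The key step is a scaling argument on BSP2$'$. Writing $A(\bm{y},\bm{u}_x):=\sum_{i=1}^n\big(\bm{y}_i^T(b_i-D_i\hat{\bm{z}})+\bm{u}_{xi}^T(b_{xi}-H_{xi}\hat{\bm{z}})\big)$, I would fix $\bm{w}\ge 0$ and maximize $A$ subject to the $\bm{y}$-constraints whose right-hand sides equal $\gamma_i c_i^T(\bm{w}+1)$. Since $A$ and the constraint left-hand sides are homogeneous of degree one in $(\bm{y},\bm{u}_x)$ while the right-hand sides scale by the strictly positive factor $(\bm{w}+1)$, the change of variables $(\bm{y},\bm{u}_x)\mapsto(\bm{w}+1)(\bm{y},\bm{u}_x)$ is a bijection onto the feasible set of~\eqref{eq:8}, and the inner optimal value is precisely $(\bm{w}+1)\mathfrak{D}_8$. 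Combining this with the $(\bm{x},\bm{u}_y,\bm{v}_y,\bm{u})$-block, whose contribution is $-\bm{w}\,\mathfrak{D}_6$ by the same homogeneity used in Theorem~\ref{theorem2}, BSP2$'$ reduces to the one-dimensional problem $\max_{\bm{w}\ge 0}\big[(\bm{w}+1)\mathfrak{D}_8-\bm{w}\,\mathfrak{D}_6\big]=\mathfrak{D}_8+\max_{\bm{w}\ge 0}\bm{w}(\mathfrak{D}_8-\mathfrak{D}_6)$.

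From here the conclusion is immediate once boundedness is settled. Under Assumptions~\ref{assump3} and~\ref{ass1} the primal subproblem~\eqref{eq:4} is feasible and bounded for admissible $\hat{\bm{z}}$, so by strong duality BSP$'$ has a finite optimum; the slope $\mathfrak{D}_8-\mathfrak{D}_6$ therefore cannot be positive, the maximizer is $\bm{w}^\ast=0$, and the optimal value is $\mathfrak{D}_8$. Hence the cut components $(\bm{y},\bm{u}_x)$ are read off from~\eqref{eq:8} and $(\bm{x},\bm{u}_y,\bm{v}_y,\bm{u})$ from~\eqref{eq:6}, with neither problem's objective referencing the other's value, so they may be solved \emph{independently} rather than \emph{sequentially} as in Theorem~\ref{theorem2}. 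I expect the main obstacle to be the boundary case $\bm{w}=0$ of the scaling argument, where the $\bm{x}$-block feasible set degenerates to its recession cone: I would need to confirm that $-\bm{w}\,\mathfrak{D}_6$ remains the correct contribution there (equivalently that $\mathfrak{D}_6>-\infty$, i.e.\ BSP1 is bounded) and to translate unboundedness of BSP2$'$ — which occurs exactly when $\mathfrak{D}_8>\mathfrak{D}_6$ — into a valid feasibility cut, thereby ensuring the independent-solve scheme produces correct optimality and feasibility cuts in every case.
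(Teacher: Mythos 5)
Your proposal is correct and follows essentially the same route as the paper's Appendix~D proof (which builds on the cited Byeon--Van Hentenryck separation scheme): use the degree-one homogeneity of the $(\bm{y},\bm{u}_x)$-block to show its value at fixed $\bm{w}$ equals $(\bm{w}+1)\mathfrak{D}_8$, combine with the $-\bm{w}\,\mathfrak{D}_6$ contribution of the $(\bm{x},\bm{u}_y,\bm{v}_y,\bm{u})$-block, and conclude from boundedness of BSP$'$ that the one-dimensional problem in $\bm{w}$ attains its optimum at $\bm{w}^\ast=0$, so \eqref{eq:6} and \eqref{eq:8} decouple. The two caveats you flag --- the recession-cone behaviour at $\bm{w}=0$ and converting $\mathfrak{D}_8>\mathfrak{D}_6$ into a feasibility cut --- are precisely the points the paper also addresses, via the convention that $\mathfrak{D}_6=\infty$ when \eqref{eq:6} has no finite optimum and the ``solve independently and compare objective values'' rule stated after the theorem.
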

\begin{proof}
Please refer to Appendix D of the supplementary material.
\end{proof}
The proof of Theorem~\ref{theorem3} implies that the Benders cuts for the SLP~\eqref{eq:2} with $c_{xi}^T = \gamma_i c_{i}^T$ conditions can be obtained by solving Problems~\eqref{eq:6} and~\eqref{eq:8} \emph{independently} and comparing their objective values. This simplifies the Benders cut generation algorithm as described in Algorithm 2 and the RMP is presented in Corollary 2. For further details on Corollary 2 and Algorithm 2, please see Appendix E in the supplementary materials.

\subsection{Benders Decomposed Subproblems with Further Multi-level Separation for Case  \texorpdfstring{\uppercase\expandafter{\romannumeral3}}{III}}\label{section3c}
In this section, we investigate a particular instance of Case  \texorpdfstring{\uppercase\expandafter{\romannumeral2}}{II}, referred to as Case  \texorpdfstring{\uppercase\expandafter{\romannumeral3}}{III}, that introduces advanced features building on Theorem~\ref{theorem3}. This subsection examines the approach suggested in the third blocks of Fig.~\ref{fig:structure}. While Theorem~\ref{theorem3} demonstrates that the computation of BSPs with the special upper-level objectives can be theoretically separated into two more tractable problems~\eqref{eq:6} and~\eqref{eq:8} (i.e., decomposed into a primal-related problem~\eqref{eq:6} and a dual-related problem~\eqref{eq:8}), the complexity of solving these problems still increases with the number of lower-level problems $n$. Additionally, the use of the weighted-sum method introduces scaling factors $\gamma_i$ to the objective functions for variables $(\bm{x}, \bm{u}_x, \bm{u}_y)$ in the subproblems defined by~\eqref{eq:6} and the dual variables $\bm{y}$ in~\eqref{eq:8} are scaled accordingly. This scaling may introduce numerical issues, particularly as $\gamma$ approaches 1 (and therefore $(1-\gamma) \rightarrow0$) and as the number of lower-level problems increases. To address this issue, we propose the following Theorem:
\begin{theorem}
\label{theorem3.3}
BSP1~\eqref{eq:6} can be further decomposed into $n$ problems that can be solved \emph{sequentially} if there are no \textit{upper dual complicating constraints}~\eqref{eq:1:u4} and BSP2~\eqref{eq:8} can be further decomposed into $n$ problems and solved \emph{in parallel}.
\end{theorem}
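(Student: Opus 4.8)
The plan is to establish the two claims separately, since they concern the primal-related subproblem BSP1~\eqref{eq:6} and the dual-related subproblem BSP2~\eqref{eq:8} respectively. The common starting observation is that the only mechanism coupling all $n$ blocks \emph{simultaneously} in BSP1 is the shared multiplier $\bm{u}$ of the upper dual complicating constraint~\eqref{eq:1:u4}: this multiplier enters \emph{every} block through the term $-S_{yi}^{T}\bm{u}/\gamma_i$ in~\eqref{eq:6:x2}. Deleting~\eqref{eq:1:u4} removes $\bm{u}$ together with all $S_{yi}$ terms, so that in both subproblems the remaining coupling acts only between consecutive blocks --- through $B_i\bm{x}_{i-1}$ in BSP1 and through $\bm{y}_{i+1}^{T}B_{i+1}$ in BSP2. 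The proof then reduces to analysing these nearest-neighbour couplings. A second ingredient used throughout is the rescaling $\bar{\bm{y}}_i := \bm{y}_i/\gamma_i$ (and correspondingly $\bar{\bm{u}}_{xi}:=\bm{u}_{xi}/\gamma_i$), which removes the ill-conditioning caused by $\gamma_i = \gamma(1-\gamma)^{i-1}\to 0$ and exposes the underlying block structure.

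For BSP1, after eliminating $\bm{u}$ the constraint system~\eqref{eq:6:x1}--\eqref{eq:6:v1} is block lower-bidiagonal: block $i$ carries the variables $(\bm{x}_i,\bm{u}_{yi},\bm{v}_{yi})$ and its only link to the rest of the problem is the term $B_i\bm{x}_{i-1}$ inherited from block $i-1$, while the objective~\eqref{eq:6:o1} is block-separable. I would first make this staircase structure explicit and then argue that, because the weights obey $\gamma_1\gg\gamma_2\gg\cdots$ (geometric decay with ratio $1-\gamma$), minimising the weighted sum is, in the limit $\gamma\to1$, equivalent to the lexicographic solve in which block~$1$ is optimised first, its optimiser $\bm{x}_1$ is frozen, block~$2$ is then optimised given $\bm{x}_1$, and so on. Under Assumption~\ref{assump3} and Assumption~\ref{ass1} (relatively complete recourse: for any feasible upstream $\bm{x}_{i-1}$ the $i$-th block remains feasible), freezing the upstream optimiser is without loss, so BSP1 reduces to the forward sequence of $n$ small linear programs, which is exactly the claimed \emph{sequential} decomposition.

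For BSP2, I would apply the rescaling above to~\eqref{eq:8:y1}--\eqref{eq:8:yn}. Dividing constraint~$i$ by $\gamma_i$ turns its coupling term into $(\gamma_{i+1}/\gamma_i)\,\bar{\bm{y}}_{i+1}^{T}B_{i+1}$, and since $\gamma_{i+1}/\gamma_i = 1-\gamma$ for $i\le n-2$ (with the analogous vanishing ratio $\gamma_n/\gamma_{n-1}=(1-\gamma)/\gamma$ for the final block), the coupling coefficient tends to $0$ as $\gamma\to1$. In that regime the constraints collapse to the block-diagonal form $\bar{\bm{y}}_i^{T}A_i + G_{xi}^{T}\bar{\bm{u}}_{xi}\le c_i^{T}$, which is precisely the per-follower dual-feasibility condition of the $i$-th lower-level problem. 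Because the objective~\eqref{eq:8:o1} is already block-separable and each weight $\gamma_i$ multiplies only its own block, the $n$ resulting problems share neither variables nor constraints and can be solved \emph{in parallel}, their optima recombined with the weights $\gamma_i$.

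The main obstacle is making the two limiting arguments rigorous rather than merely asymptotic. For BSP1 the delicate point is proving that freezing each upstream optimiser incurs no loss: this requires showing that the geometric weighting enforces a genuine lexicographic ordering (including a tie-breaking argument when a block optimum is non-unique) and that relatively complete recourse follows from the Slater qualification of Assumption~\ref{ass1}, so that the forward solve reproduces the optimiser of the joint problem. For BSP2 the crux is controlling the neglected coupling term $(1-\gamma)\,\bar{\bm{y}}_{i+1}^{T}B_{i+1}$: I would bound the perturbation it induces on the optimal value of each decoupled block and show it vanishes with $1-\gamma$, consistently with the asymptotic nature of Theorem~\ref{theorem1}, while verifying that the irregular final ratio $\gamma_n/\gamma_{n-1}$ does not spoil the uniformity of this estimate. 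Both arguments inherit boundedness and feasibility from Assumption~\ref{assump3}, which guarantees that the decomposed subproblems are themselves well posed.
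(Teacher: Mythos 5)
Your plan matches the paper's argument: the proof likewise removes the globally coupling multiplier $\bm{u}$ of~\eqref{eq:1:u4}, reads the geometric weights $\gamma_i$ as the lexicographic scalarization so that the staircase-coupled BSP1~\eqref{eq:6} reduces to a forward sequence of $n$ block solves, and rescales $(\bm{y}_i,\bm{u}_{xi})$ by $\gamma_i$ in BSP2~\eqref{eq:8} so that the inter-block coupling carries a factor $(1-\gamma)$ and vanishes as $\gamma\to 1$, leaving $n$ independent per-follower dual-feasibility problems recombined with weights $\gamma_i$. The caveats you flag (tie-breaking when a block optimum is non-unique, recourse justifying the freezing of upstream optimizers, and uniform control of the dropped $(1-\gamma)$ coupling term, including the final ratio $\gamma_n/\gamma_{n-1}$) are precisely the points the paper's asymptotic argument must and does handle, so your proposal is the same route rather than an alternative one.
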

\begin{proof}
Please refer to Appendix F of the supplementary material.
\end{proof}



Theorem \ref{theorem3.3} demonstrates that when solving BSP~\eqref{eq:6} with a fixed upper-level decision $\hat{\bm{z}}$ from the master problem, only connecting variables (e.g. $\bm{x}_i$, $i\in[n-1]$) need to be exchanged between sequential lower-level problems, limiting the information flow between them. Without it, solving the single-level problem~\eqref{eq:2} would require directly incorporating variables and constraints from $n$ lower-level agents into BSP1~\eqref{eq:6} and BSP2~\eqref{eq:8}. Such direct incorporation would require scaling factors to capture the sequential relationships between lower-level problems, leading to significant numerical scalability challenges. Instead, Theorem~\ref{theorem3.3} enables a more scalable solution approach. It allows BSP~\eqref{eq:6} and BSP~\eqref{eq:8} to be solved either sequentially or independently by leveraging the interpretation of scaling factors across $n$ problems. This eliminates the computational burden and avoids scalability issues of solving BSP~\eqref{eq:6} and BSP~\eqref{eq:8} as $n$ coupled lower-level problems.

\section{The Four-level Unit Commitment with Gas and Carbon Awareness Model}\label{section4}
In this section, we provide an application of the MIMLSF model by proposing a four-level Unit Commitment with Gas and Carbon Awareness (UCGCA) model, which integrates the electricity, gas, and carbon market framework. The design of the bid-validity constraints for GFPPs and the coupling between three markets are introduced in Section~\ref{section4B}. The relationship between the UCGCA model and the MIMLSF problem is then discussed in Section~\ref{section4C}. The mathematical formulations for UC, economic dispatch (ED), gas market clearing, and carbon market clearing problems are detailed in Appendix G of the supplementary material. The ED problem is also referred to as the electricity market clearing problem.

\begin{figure}[tp]
    \centering
    \includegraphics[width=0.5\columnwidth]{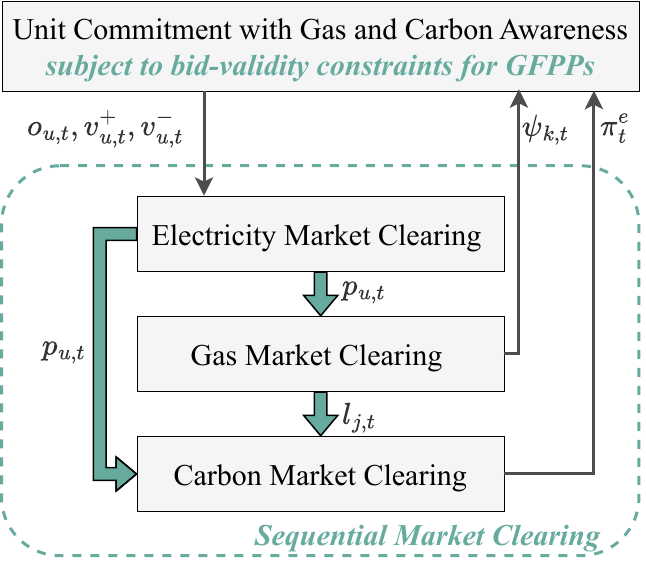}
    \caption{Four-level UCGCA model within an integrated electricity, gas, and carbon market clearing framework.}
    \label{fig:four-level}
\end{figure}
Fig.~\ref{fig:four-level} illustrates the interactions among UC, electricity, natural gas, and carbon market clearing problems. The upper-level UC problem is managed by Independent System Operators (ISOs). The commitment decisions, including the on/off statuses $o_{u,t}$, and indicators for generator $u$ start-ups $v^+_{u,t}$ and shut-downs $v^-_{u,t}$ at time $t$, are fed into the ED problem (i.e., electricity market clearing) to determine the power output $p_{u,t}$. This power schedule $p_{u,t}$ for GFPPs generates a gas demand \( l^{GFPP}_{j,t} \) at the corresponding gas network junction \( j \), which is approximated using the heat rate curve~\eqref{eq:gfppconversion}, as further discussed in Section~\ref{section4B}. The power output \( p_{u,t} \) and the satisfied gas demand \( l_{j,t} \) lead to carbon emissions, enabling participants to trade surplus allowances or purchase additional allowances in the carbon market. A net allowance greater than zero indicates a surplus, allowing participants to sell on the market for a profit; conversely, a deficit requires purchasing allowances, incurring costs. The derived dual solutions, including zonal gas prices \( \psi_{k,t} \) and carbon prices \( \pi^e_t \), are then incorporated into the upper-level bid-validity constraints to ensure only profitable GFPPs are committed. The design of this bid-validity constraint is discussed in Section~\ref{section4B}.

Power system operations fundamentally rely on two sequential processes: UC and ED, which operate at sequential temporal scales \citep{SAKHAVAND2024100089,JointBoard2006SCED}. The day-ahead UC problem, typically formulated as a Mixed-Integer Linear Programming (MILP) model, determines the binary commitment status -- start-up and shut-down decisions of generating units for each hour of the following operating day -- while considering various operational constraints. Subsequently, as the operating period approaches, system operators execute ED closer to real-time to optimize the power output levels of the previously committed resources, ensuring supply-demand balance and adherence to network physical constraints while minimizing overall operational costs.

\subsection{The Physical and Economic Coupling of Three Markets and the Bid Validity Constraints}\label{section4B}
The physical coupling between electric and gas network is expressed through the heat rate curve~\eqref{eq:gfppconversion} for GFPPs, which is approximated as a quadratic function relating gas consumption $l^{GFPP}_{j,t}$ to electricity generation $p_{u,t}$:
\begin{align}
\qquad \qquad \qquad \qquad \qquad l^{GFPP}_{j,t} = \sum_{u \in \mathcal{U}(j) \cap \mathcal{U}^g} H^G_{u,2} p_{u,t}^2 + H^G_{u,1} p_{u,t} + H^G_{u,0}, \quad \forall j \in \mathcal{V}, \forall t \in \mathcal{T}.\label{eq:gfppconversion}
\end{align}
The physical coupling between the electric and gas markets with the carbon market is facilitated through the power generation level \( p_{u,t} \) and the satisfied gas demand \( l_{j,t} \). The latter is computed as the difference between the gas demand profile \( d^g_{j,t} \) and the gas load shed \( q_{j,t} \). Specifically, the emission levels are determined by the Carbon Intensity (CI) factors \( \kappa_{u} \) and \( \kappa_{j} \), which are multiplied by their respective cleared quantities, as shown in Eq.~\eqref{eq:carbon}.
\begin{subequations}
\begin{align}
& \qquad \qquad \qquad \qquad \qquad \qquad \qquad \qquad E_{u,t} = \kappa_{u} p_{u,t}, \quad\forall u \in \mathcal{U}, t \in [T],\label{eq:carbon_con1} \\
& \qquad \qquad \qquad \qquad \qquad\qquad \qquad \qquad E_{j,t} = \kappa_{j} l_{j,t}, \quad \forall j \in \mathcal{V}, t \in [T].\label{eq:carbon_con2}
\end{align}
\label{eq:carbon}
\end{subequations}
The power generation levels of GFPPs significantly influence the load on the gas system and carbon emission levels and, consequently, natural gas prices and carbon prices. These prices, in turn, determine the profitability of GFPPs, which place their bids in the electricity market prior to realizing gas and carbon prices. These dynamics are captured in the bid-validity constraints~\eqref{eq:bidvalidity}, designed to compare the GFPPs’ marginal bids $\rho_{u,t}$ (see the definition and related constraints of $\rho_{u,t}$ in Appendix G.1 of the supplementary material) in the electricity market with the economic realities of the gas and carbon markets. It ensures that their participation remains economically viable and profitable by aligning commitment decisions $o_{u,t}$ with anticipated outcomes from gas and carbon markets. $\alpha_u$ represents the risk aversion level of GFPP $u$, modulating the relationship between the plant’s expected profits and the risks it faces in volatile markets. A lower value of $\alpha_u$ indicates a more risk-averse GFPP, meaning it would prefer to de-commit to avoid potential financial losses.
\begin{align}
\qquad\qquad\alpha_u\rho_{u,t} \geq \big((2p_{u,t}H_{u,2}^G + H_{u,1}^G) \psi_{k,t}+(2y_{u,t}-1)\kappa_{u}\pi^{e}_t \big) o_{u,t},\quad \forall k \in \mathcal{K}, i \in \mathcal{V}(k), u \in \mathcal{U}(i) \cap \mathcal{U}^{g}.\label{eq:bidvalidity}
\end{align}
The right-hand side of Equation~\eqref{eq:bidvalidity} is composed of two main components:

\begin{enumerate}
    \item Marginal Natural Gas Cost $(2p_{u,t}H_{u,2}^G + H_{u,1}^G) \psi_{k,t}$: The term \( (2p_{u,t}H_{u,2}^G + H_{u,1}^G) \) represents the derivative of the heat rate curve~\eqref{eq:gfppconversion}, representing the amount of natural gas needed to generate one additional unit of electricity. This term is multiplied by the zonal gas prices \(\psi_{k,t}\) to represent the marginal cost of GFPP $u$.

    \item Marginal Carbon Trading Cost or Revenue $(2y_{u,t}-1)\kappa_{u}\pi^{e}_t $: When \( y_{u,t} = 1 \), indicating the GFPP with deficit allowances which purchases emission allowances in the carbon market, adding a marginal cost \(\kappa_{u}\pi^{e}_t \) to generate one additional unit of electricity. On the other hand, when \( y_{u,t} = 0 \), the GFPP with surplus allowances sells its carbon allowances, leading to a negative term of \(-\kappa_{u}\pi^{e}_t \), which represents the marginal revenue gained.
\end{enumerate}

In the gas market modeling, dual solutions associated with the flux conservation constraints represents the marginal costs at gas junction $j$. However, the US natural gas spot price is zonal, the zonal natural gas prices $\psi_{k,t}$ at zone $k$ are then computed based on averaging the prices of a subset of junctions \citep{8844828}. In the case study, we examine two distinct gas pricing zones following \citet{8395045,8844828,zonewithline}: the Transco Zone 6 Non NY Zone and the Transco Leidy Zone. The Transco Zone 6 Non NY Zone typically exhibits higher prices due to its location in a major gas consumption area, while the Transco Leidy Zone, situated in the Marcellus Shale production area, generally maintains lower prices due to abundant natural gas supplies. The selection of these zones enables analysis of price dynamics across regions with significant price differentials.

The bid-validity constraint~\eqref{eq:bidvalidity} is added to the upper-level UC problem and it introduces nonlinearity into the model and these terms can be linearized by using the McCormick relaxation technique. The bid-validity constraint~\eqref{eq:bidvalidity} enables GFPPs to hedge against the risks associated with high gas prices and emission costs/revenue. This risk management is vital, ensuring that committed GFPPs consider all relevant costs and potential revenues following their bid submissions in the electricity market. By incorporating this constraint into the UC problem, system operators can improve the financial viability of GFPPs and prevent default or large financial losses. This is particularly crucial during periods of high demand, such as the 2014 polar vortex experienced in the Northeastern United States, to prevent defaults \citep{8844828, PJM2014ColdWeather}.

\subsection{Relationship between the four-level UCGCA model and the MIMLSF problem}\label{section4C}

The UCGCA model is a specific instance of the MIMLSF problem where \(n=3\), corresponding to the three sequential lower-level problems. The UCGCA model features a specialized structure for the dedicated Benders decomposition as outlined in Section~\ref{section3b}: the objective function of the upper-level UC problem includes:
1) \(c_z^T \bm{z}\): costs associated with the binary variable \(\bm{z}\), encompassing no-load and start-up costs,
2) \(c_{x1}^T \bm{x}_1\): costs of selected supply bids from electrical power generating units, which is exactly the objective function of the second-level ED problem (i.e., $c_{x1}^T = c_1^T, c_{xi}^T = 0,  \forall i \in [n]^+$, the upper-level objective is $ c_z^{T} \bm{z} +  c_{1}^{T} \bm{x}_{1}$ and the first lower-level objective is $c_{1}^{T} \bm{x}_{1}$). Therefore, the dedicated Benders decomposition with subproblem separability introduced in Section~\ref{section3b} for Case \uppercase\expandafter{\romannumeral2} can be effectively applied for the UCGCA model.

\section{Real-World Implementation of MIMLSF: Four-Level UCGCA Model in the Northeastern United States}\label{section5}

This section presents a case study on the integration of the IEEE 36-bus Northeastern U.S. bulk electric power system \citep{4560041,8395045} with a multi-company gas transmission network spanning from Pennsylvania to Northeast New England \citep{zonewithline,8844828,8395045}, as depicted in Fig.~\ref{fig:USnetworks}. In Fig.~\ref{fig:USnetworks}, blue connections represent electricity transmission lines, with blue markers indicating the locations and magnitudes of electricity generation and load levels. Similarly, green connections represent natural gas transmission lines, with green markers indicating the locations and magnitudes of gas supply and demand levels. The gas electric coupling nodes are depicted in red circle which specifies the location of GFPPs and the GFPP of the electric power network were linked to the closest natural gas receipt point in the gas system. As discussed in Section~\ref{section4B}, we consider two natural gas pricing zones: Transco Zone 6 Non NY Zone and Transco Leidy Line Zone. Fig.~H.1 in the supplementary material shows the pricing junctions for these zones. Transco Zone 6 Non NY Zone are represented by square markers, whereas the pricing points/junctions for Transco Leidy Line Zone are represented by diamond markers. The pricing points are based on previous studies \citet{8395045,8844828} and the GasPowerModels.jl repository \url{https://github.com/lanl-ansi/GasPowerModels.jl}.

\begin{figure}[th!]
    \centering
    \includegraphics[width=0.8\columnwidth]{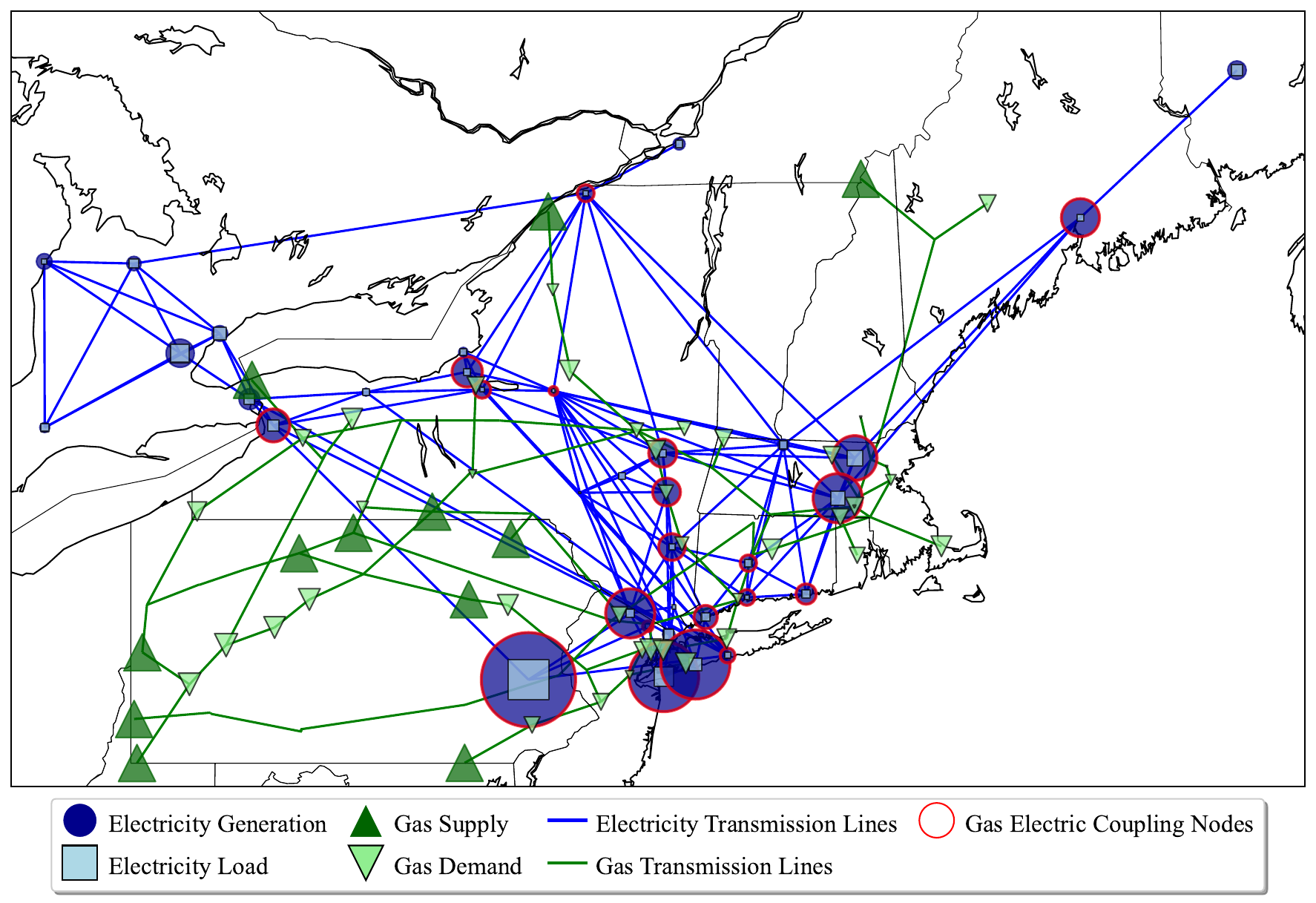}
    \caption{
    Electric and gas transmission networks in the Northeastern United States, depicted with simplified point-to-point connections. Node markers are sized proportionally to maximum supply and load capacities at $(\eta_e,\eta_g)=(1,1)$, with gas system markers shown on a logarithmic scale. GFPPs in the electric network are connected to their nearest natural gas receipt points in the gas network.}
    \label{fig:USnetworks}
\end{figure}


\begin{table}[th!]
    \centering
    \caption{Carbon Intensity of Generators in the Electric Network, sourced from \citet{SAVELLI2022106218}. Fuel types include Oil (O), Coal (C), Open Cycle Gas Turbine (G-O), Combined Cycle Gas Turbine (G-C), Hydro (H), Refuse (R), Nuclear (N), and Others (E).}

    \begin{tabular}{ccccccccc}
    \hline
    \textbf{Fuel Type} & \textbf{O} & \textbf{C} & \textbf{G-O} & \textbf{G-C} & \textbf{H }& \textbf{R} & \textbf{N} & \textbf{E}  \\
    \hline
    CI ($tCO_2/MWh$) & 0.777 & 0.937 & 0.651 & 0.394 & 0 & 0.120 & 0 & 0.300 \\
    \hline
    \end{tabular}%
    
    \label{table:carbon_generator}
\end{table}
To analyze system behavior under different market conditions, we varied both electrical and gas load parameters in our case study. We examined electrical load increases of 20\% and 60\% (i.e., $\eta_e = {1.0,1.2,1.6}$) and gas load increases ranging from 10\% to 130\% (i.e., $\eta_g = {1.0,1.1,\ldots,2.3}$). These parameter variations allowed us to evaluate the economic viability of GFPP under increasing gas costs, assess the effectiveness of bid-validity constraints, and demonstrate the performance advantages of the proposed UCGCA approach. The electric network examined includes 91 generators of various fuel types, each characterized by specific CI values referenced by \citet{SAVELLI2022106218}, as listed in Table~\ref{table:carbon_generator}. The UC data is derived from the RTO Unit Commitment Test System, with further details provided in \citet{8844828, FERC2024}. Bid prices for participants in the carbon market are sampled from normal distributions with standard deviations of \$10/$tCO_2$ and means equal to \$30.24/$tCO_2$. Negative prices are set to zero. The mean value is based on the California Cap-and-Trade Program's carbon allowance prices for Q3 2024 \citep{CARBCapAndTrade2024}. In the absence of specific allowance data, we allocate carbon allowances to each generator at 50\% of their maximum emission potential, calculated as the maximum power output multiplied by the carbon intensity. On the other hand, carbon allowances for each gas load junction are set at 165\% of the firm gas load under standard conditions (i.e., when $\eta_g = 1.65$) multiplied by the carbon intensity (CI) factor for natural gas, which is set at 55 $tCO_2$/mmcf according to \citet{epa_ghg_equivalencies}. The selection of $\eta_g = 1.65$ represents an intermediate stress point on the gas network, positioned in the middle of the tested range ($\eta_g = {1.0, 1.1, \ldots, 2.3}$). The cost associated with gas load shedding is set to \$130/mmBtu \citep{8844828}, while the cost for acquiring additional external carbon allowances is priced at \$50/$tCO_2$. The risk aversion level, $\alpha_u$, is set to 100\%. 

\begin{figure}[th!]
    \centering
    \includegraphics[width=0.8\columnwidth]{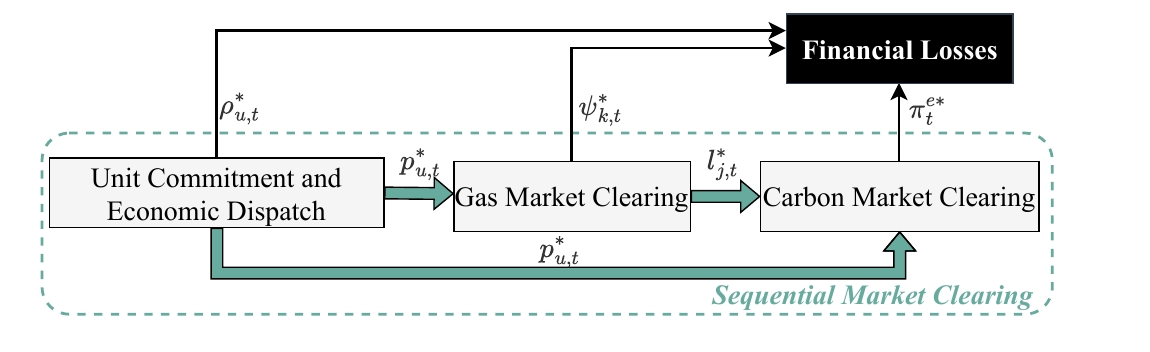}
    \caption{Three-stage sequential Benchmark Model (BM) without GFPPs' bid validity constraints.}
    \label{fig:BM_model}
\end{figure}

To demonstrate the effectiveness of incorporating anticipated gas and carbon market outcomes into the UC problem (i.e., the UCGCA model), we evaluate its performance against a Benchmark Model (BM), as illustrated in Fig.~\ref{fig:BM_model}. The BM is structured as a three-stage sequential optimization problem: initially, the UC and ED are solved to determine the power outputs of GFPPs. These outputs then inform the gas demand induced by GFPPs $l^{GFPP}_{j,t}$ required for gas market clearing, which is calculated using the heat rate curve~\eqref{eq:gfppconversion}. Subsequently, the carbon market is cleared based on actual emissions and submitted bids. Fig.~\ref{fig:BM_model} shows the three-stage BM structure without any bid validity constraints specifically for GFPPs, and invalid bids can be calculated after the three-stage market clearing. Notably, the BM does not account for the economic viability of GFPPs. These plants submit their bids in the electricity market prior to the realization of gas and carbon prices. As a result, they may incur financial losses from invalid bids, especially under conditions of high stress in gas and electricity networks. These losses are quantified by calculating the sum of the difference between the violated GFPPs' marginal bids $\rho^*_{u,t}$ times the risk aversion level $\alpha_u$ in the electricity market and the realized costs of gas and carbon $(2p^*_{u,t}H_{u,2}^G + H_{u,1}^G) \psi^*_{k,t}+(2y^*_{u,t}-1)\kappa_{u}\pi^{e*}_t$, multiplied by the power produced $p^*_{u,t}$, i.e.,
\begin{align}
\qquad\qquad\textit{Financial Losses} = \sum_{t \in [T]}\sum_{u\in\mathcal{U}_{g}} \max&\bigg(0, (2p^*_{u,t}H_{u,2}^G + H_{u,1}^G) \psi^*_{k,t}+(2y^*_{u,t}-1)\kappa_{u}\pi^{e*}_t - \alpha_u\rho^*_{u,t}\bigg)p^*_{u,t}.\notag
\end{align}

The model runs were performed using C++/Gurobi 11.0.0 on an Apple M1, 3.2 GHz processor with 16 GB of RAM, with each run having a wall-time limit of 1 hour. The analysis covers a single time-period (i.e., $T = 1$).

\subsection{Effectiveness of the Bid-Validity Constraints}

\begin{figure}[th!]
    \centering
    \includegraphics[width=1\columnwidth]{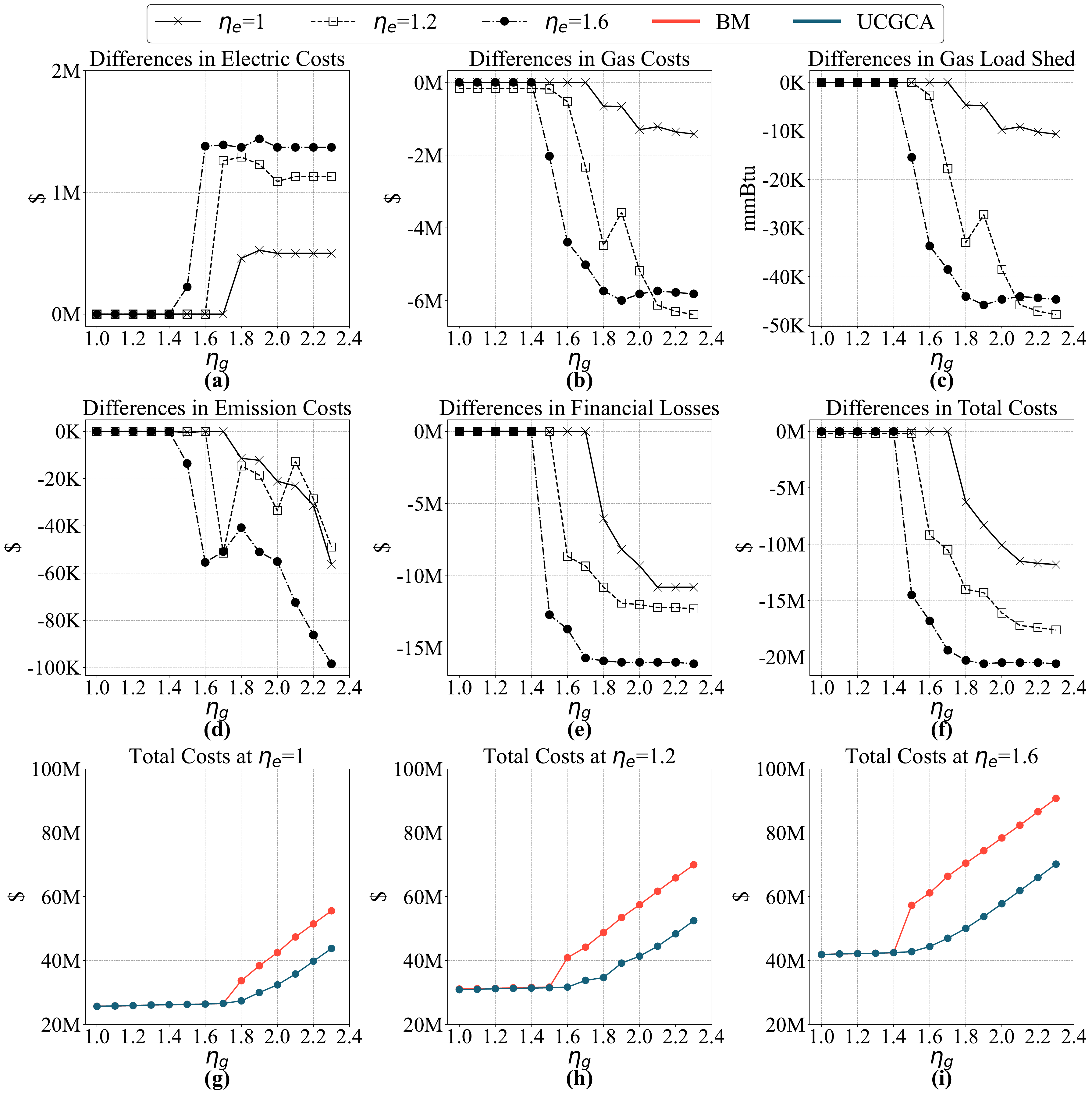}
    \caption{Cost performance comparison between BM and UCGCA under varying electrical ($\eta_e$) and gas ($\eta_g$) stress levels, with the x-axis representing $\eta_g$. Positive/negative differences indicate higher/lower UCGCA values relative to BM. Panels show differences in: (a) electric costs, (b) gas costs, (c) gas load shed, (d) emission costs, (e) financial losses, where UCGCA avoids GFPP losses through bid-validity constraints, and (f) total costs, where BM total costs include electric, gas, carbon market costs, and financial losses, while UCGCA total costs comprise only market costs because UCGCA avoids financial losses. (g)-(i) Direct comparison of total costs between models at $\eta_e = \{1, 1.2, 1.6\}$.}
    \label{fig:6}
\end{figure}
\begin{figure}[th!]
    \centering
    \includegraphics[width=1\columnwidth]{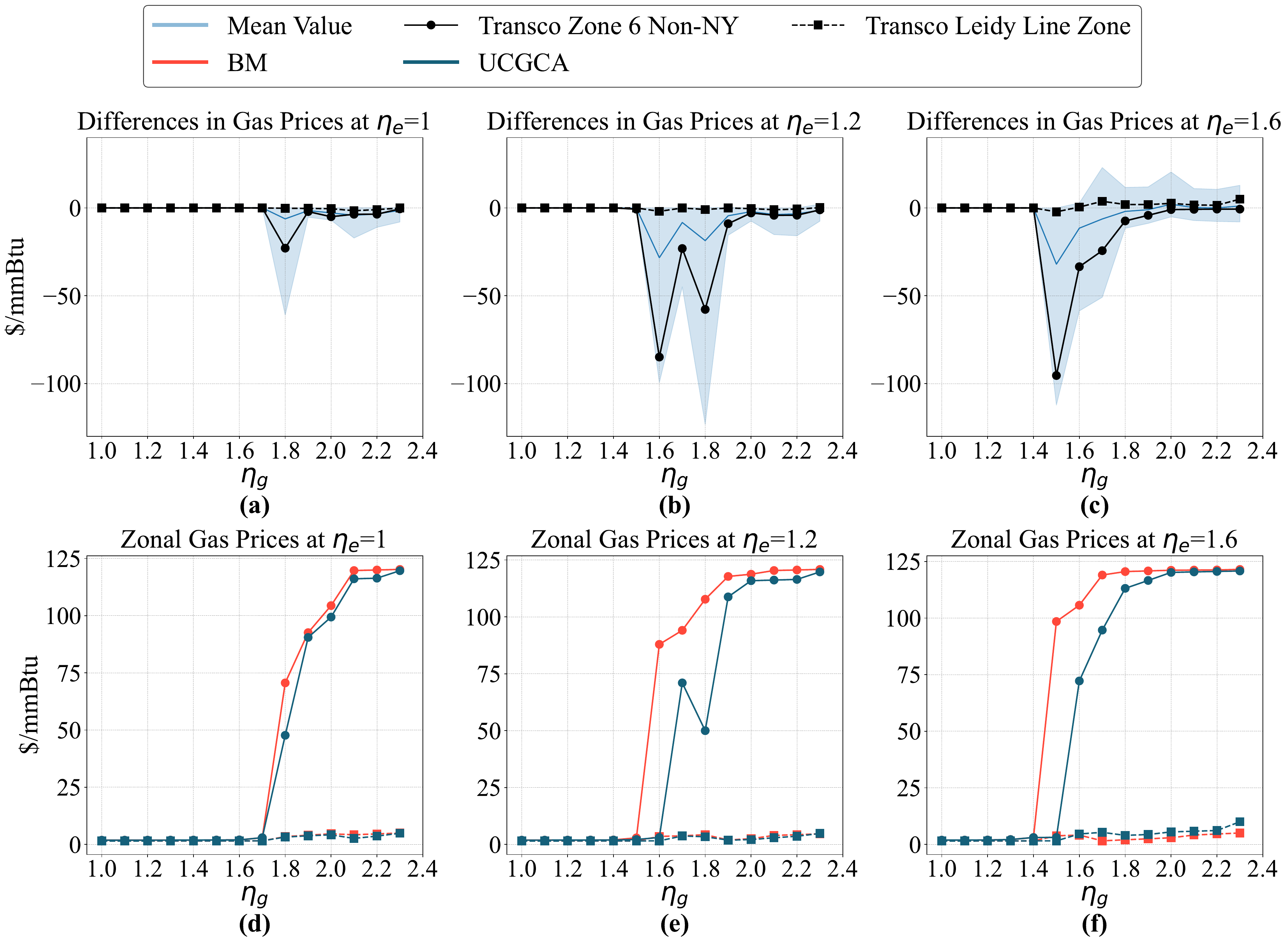}
    \caption{Comparison of Natural Gas prices between BM and UCGCA under varying electrical ($\eta_e$) and gas stresses ($\eta_g$), with the x-axis representing $\eta_g$. Note that positive/negative differences indicate higher/lower values in UCGCA compared to BM. The shaded area around the blue line indicates the 95\% percentile interval, showing the range where most of the junction prices differences fall. Plots (a)-(c) show the natural gas price differences between UCGCA and BM at electrical stress levels $\eta_e = \{1,1.2,1.6\}$, while plots (d)-(f) present the corresponding zonal gas prices for both models at the same $\eta_e$ values.}
    \label{fig:7}
\end{figure}

In this section, we compare the performance of the UCGCA and BM models under various system stress conditions. The electrical system stress is characterized by load increases of 20\% and 60\% (stress levels $\eta_e = \{1.0, 1.2, 1.6\}$, representing normal, slight, and high stress), while the gas network stress reflects load increases from 10\% to 130\% (stress levels $\eta_g = \{1.0, 1.1, \ldots, 2.3\}$). Table H.8 in the supplementary material provides additional analysis, presenting the distribution of committed generators across different fuel types for nine representative instances.

Fig.~\ref{fig:6} illustrates the differences in cost and gas load shed between the UCGCA and the BM. Fig.~\ref{fig:7} extends this comparison to differences in gas prices between the two models across all instances. Note that the positive/negative differences indicate higher/lower values achieved in the UCGCA. Fig.~\ref{fig:6} illustrates the differences in cost and gas load shed between the UCGCA and the BM. BM's total costs include electric, gas, carbon market costs, and financial losses from unprofitable gas-fired power plants. In contrast, UCGCA only incurs electric, gas, and carbon market costs, as its bid-validity constraints prevent unprofitable operations and financial losses. For direct comparison, Fig.~\ref{fig:6}(g), (h) and (i) show the actual total costs of both models. Furthermore, Fig.~\ref{fig:7} illustrates gas price differences between the models across all instances. We focus on two gas price regions (as done in \citet{8395045,8844828}), represented by black solid lines for Transco Zone 6 Non-NY and dashed lines for Transco Leidy Line Zone. These zonal prices are derived from average values of selected junctions (see Fig.~H.1 in the supplementary material). To capture system-wide effects, the blue line shows the mean price difference across \emph{all} junctions, with the shaded blue area around the blue line indicates the 95\% percentile interval, showing the range where most of the price differences occur. For direct comparison, Fig.~\ref{fig:7}(d), (e) and (f) show the actual zonal gas prices of both models.


Under normal electrical conditions ($\eta_e = 1$), the BM begins to suffer financial losses at a gas network stress level of $\eta_g = 1.8$, with financial losses escalating to \$10.8M as $\eta_g$ reaches 2.3 (see Fig.~\ref{fig:6}(e)). These losses are primarily due to unprofitable GFPPs, which result from the lack of awareness of gas and carbon market clearing outcomes. The BM also experiences higher gas load shedding due to increased network congestion, leading to significant rises in gas costs and abrupt spikes in zonal natural gas prices in the Transco Zone 6 Non-NY area. In contrast, the UCGCA model shows a more resilient response to these conditions, as evidenced by the negative differences in zonal gas prices at Transco Zone 6 Non-NY and reduced gas load shed from $\eta_g = 1.8$, as shown in Fig.~\ref{fig:7}(a) and Fig.~\ref{fig:6}(c), respectively. Additionally, the reduction in carbon allowance deficits among gas and power market participants contributes to lower overall emission costs. Although the UCGCA has increased electric costs from de-committing unprofitable GFPPs with lower bids (see the positive difference in Fig.~\ref{fig:6}(a)), its total costs are significantly reduced (see the large negative difference in Fig.~\ref{fig:6}(f)). This cost efficiency is achieved through bid-validity constraints that prevent financial losses from invalid bids, mitigate the impacts of gas congestion, and significantly lower gas costs. Fig.~\ref{fig:6}(g) presents a direct cost comparison between the two models under normal electrical system conditions ($\eta_e = 1$). The results, with BM shown in red and UCGCA in blue, demonstrate that UCGCA consistently achieves cost reductions when the gas network experiences congestion.

As electrical stress increases to $\eta_e = 1.2$, the advantages of the UCGCA’s operational adjustments become more apparent. For the BM, financial losses and gas price spikes occur earlier, starting at $\eta_g = 1.6$, with losses increasing to \$12.3M by $\eta_g = 2.3$. Conversely, the UCGCA maintains a lower cost profile even as gas stress intensifies, demonstrating effective management of network operations and costs, see the significant cost difference of \$17.6M at $\eta_g = 2.3$ in Fig.~\ref{fig:6}(f). This is due to the UCGCA’s effective selection of better-committed generators, which reduces the severity of sudden price increases in the Transco Zone 6 Non-NY area, as evidenced by the maximum price difference of \$84.88/mmBtu at $\eta_g = 1.6$ in Fig.~\ref{fig:7}(b). Note that in Fig.~\ref{fig:7}(b) and (e), there is a drop in Transco Zone 6 Non-NY gas prices at $\eta_g = 1.8$, which results from the presence of optimality gaps for some hard instances. 

The contrast between the outcomes of the BM and UCGCA becomes even more pronounced under the highest electrical stress level, $\eta_e = 1.6$. In this scenario, the UCGCA successfully avoids the significant financial losses from invalid bids that the BM begins to incur at $\eta_g = 1.5$. The UCGCA's overall costs are \$20.6M lower than those of the BM at $\eta_g = 2.3$, suggesting its superior ability to manage costs effectively. Fig.~\ref{fig:6}(i) further provides the actual values of total costs under BM and UCGCA at $\eta_e = 1.6$. Notably, at $\eta_g = 1.5$, the difference in zonal gas prices at Transco Zone 6 Non-NY between the BM and UCGCA reaches \$95.39/mmBtu (see Fig.~\ref{fig:7}(c) and (f)), indicating that the UCGCA significantly mitigates the impact of gas network congestion by committing alternative generators, thereby effectively reducing gas load shedding costs.

In summary, the UCGCA model, by incorporating anticipated outcomes from sequentially cleared markets into the unit commitment process, not only prevents financial losses for GFPPs but also moderates gas prices, reduces network congestion, and significantly lowers overall system costs.

\subsection{Effectiveness of the Dedicated Benders Decomposition Algorithm}
The computational efficiency of the UCGCA model was assessed using two approaches: conventional approach using Gurobi to solve the SLP~\eqref{eq:2} without decomposition (denoted as \texttt{C}) and a dedicated Benders Decomposition (denoted as \texttt{B}) for solving SLP~\eqref{eq:2} introduced in Section~\ref{section3b}. The dedicated Benders Decomposition is enhanced with two acceleration schemes: an in-out acceleration scheme with perturbation \citep{doi:10.1287/ijoc.2021.1128,Fischetti2016} and a normalization condition in the Benders subproblem (BSP) \citep{fischetti2010note, doi:10.1287/ijoc.2021.1128}. We evaluate the performances of \texttt{C} and \texttt{B} across various scenarios, using combinations of gas network stress ($\eta_g = \{1, 1.1, \ldots, 2.3\}$), electrical system conditions ($\eta_e = \{1, 1.2, 1.6\}$), and risk-aversion levels ($\alpha_u = \{80\%,100\%,120\%\}$). This parametric combination yields 126 distinct test instances ($14$ gas stress levels $\times$ $3$ electrical stress levels $\times$ $3$ risk factors). We adopt the conservative tolerance in Gurobi to ensure numerical stability and solution reliability. For \texttt{C}, we let Gurobi parameters with \texttt{NumericFocus=3}, \texttt{FeasibilityTol=1e-9}, \texttt{OptimalityTol=1e-9}, \texttt{IntFeasTol=1e-9} and \texttt{TimeLimit=3600}. For \texttt{B} subproblems, we let Gurobi parameters with \texttt{NumericFocus=3}, \texttt{FeasibilityTol=1e-9}, \texttt{DualReductions=0}, \texttt{BarHomogeneous=1}, \texttt{BarQCPConvTol=1e-7}, \texttt{Aggregate=0} and \texttt{ScaleFlag=0}. For \texttt{B} master problems, we let Gurobi parameters with \texttt{FeasibilityTol=1e-9}, \texttt{OptimalityTol\\=1e-9} and \texttt{IntFeasTol=1e-9}. The computational performance of \texttt{C} and \texttt{B} for varying parameters ($\eta_e$, $\eta_g$, and $\alpha_u$) is presented in Tables~H.8--H.10 of Appendix~H in the supplementary material.

The 126 test instances are categorized according to their computational complexity. We classify an instance as \textit{hard} if either \texttt{C} or \texttt{B} fails to achieve optimality within the time limit (3600~s), and as \textit{easy} if both methods converge to optimal solutions within this limit.

\begin{figure}[th]
\centering
        \includegraphics[width=0.8\columnwidth]{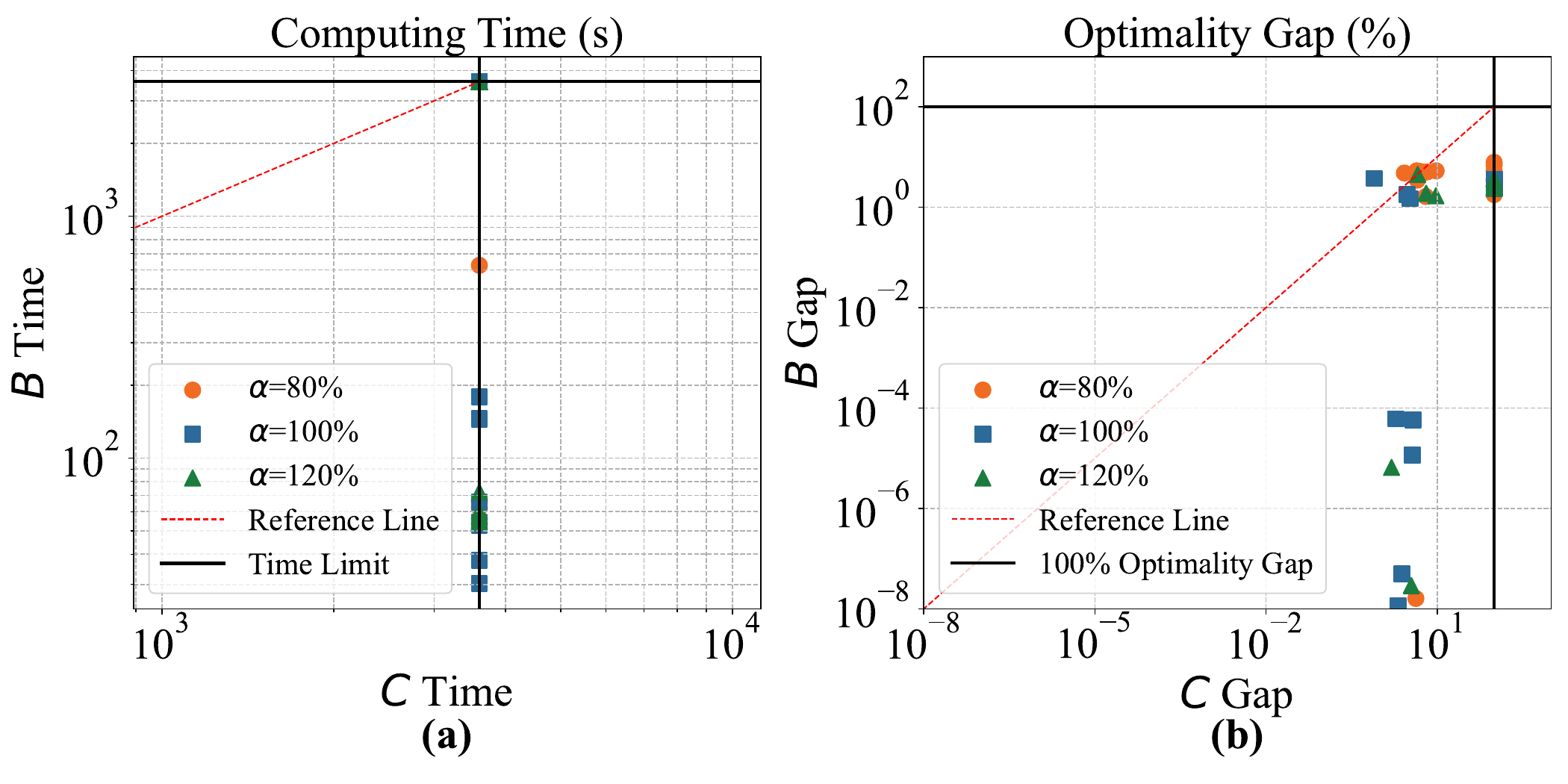}
        \caption{Comparison of computational effectiveness between using Gurobi to solve the SLP~\eqref{eq:2} without decomposition (denoted as \texttt{C}) and a dedicated Benders Decomposition (denoted as \texttt{B}) for hard instances (i.e., when the computational time of \texttt{C} or \texttt{B} reaches the 1-hour limit). (a) Computational times of \texttt{C} and \texttt{B}. (b) Optimality gaps of \texttt{C} and \texttt{B}.}
        \label{fig:computational_comparison}
    \end{figure}
Fig.~\ref{fig:computational_comparison} compares computational performance between methods \texttt{C} and \texttt{B} for hard instances. In Fig.~\ref{fig:computational_comparison}(a) and (b), the red dashed line represents equal performance between the two methods. Among 52 hard instances, \texttt{C} only outperforms \texttt{B} in only three cases ($(\alpha_u,\eta_e,\eta_g)=(80\%,1,2.1),(80\%,1,2.2),(80\%,1.2,2)$), achieving slightly smaller optimality gaps as shown by the few points above the reference line in Fig.~\ref{fig:computational_comparison}(b). However, \texttt{B} demonstrates superior performance in most hard cases, with the majority of points falling below the red dashed line, indicating both smaller optimality gaps and shorter computing times. \texttt{C}'s performance deteriorates significantly for more challenging instances, either failing to find any incumbent solution (shown by 100\% optimality gaps in Fig.~\ref{fig:computational_comparison}(b)) or producing solutions with large optimality gaps.

\begin{table}[htbp]
\centering
\caption{Averaged Computational Performance Comparison between \texttt{C} and \texttt{B}.}
\begin{tabular}{lcccc}
\hline
Instance Type & \multicolumn{2}{c}{\texttt{C}} & \multicolumn{2}{c}{\texttt{B}} \\
\cline{2-5}
& Time (s) & Gap (\%) & Time (s) & Gap (\%) \\
\hline
All Instances & 1,549.85 & 16.80 & 1,050.55 & 0.97 \\
Easy Instances & 109.20 & 0.00 & 61.45 & 0.00 \\
Hard Instances & 3,600.00 & 40.71 & 2,458.11 & 2.36 \\
\hline
\end{tabular}
\label{tab:computational_performance}
\end{table}

Table~\ref{tab:computational_performance} summarizes the computational performance of both methods across different instance types. In summary, \texttt{B} demonstrates superior computational performance compared to \texttt{C} across all test instances. On average, \texttt{B} demonstrates superior performance, reducing solution times by 32.23\% (from 1,549.85s to 1,050.55s) and optimality gaps by 94.23\% (from 16.80\% to 0.97\%). \texttt{B}'s advantage is particularly pronounced in computationally challenging instances, reducing both solution times by 31.72\% (from 3,600s to 2,458.11s) and optimality gaps by 94.20\% (from 40.71\% to 2.36\%). Even for easy instances, \texttt{B} shows efficiency improvements, solving problems 43.73\% faster than \texttt{C} (61.45s versus 109.20s). These results demonstrate that \texttt{B} not only provides high quality solutions but also achieves them more efficiently, making it particularly valuable for large-scale applications where computational performance is important.

\subsection{Strong Duality Analysis and Numerical Performance with Different Scaling Values}
\begin{figure}[th]
\centering
        \includegraphics[width=1\columnwidth]{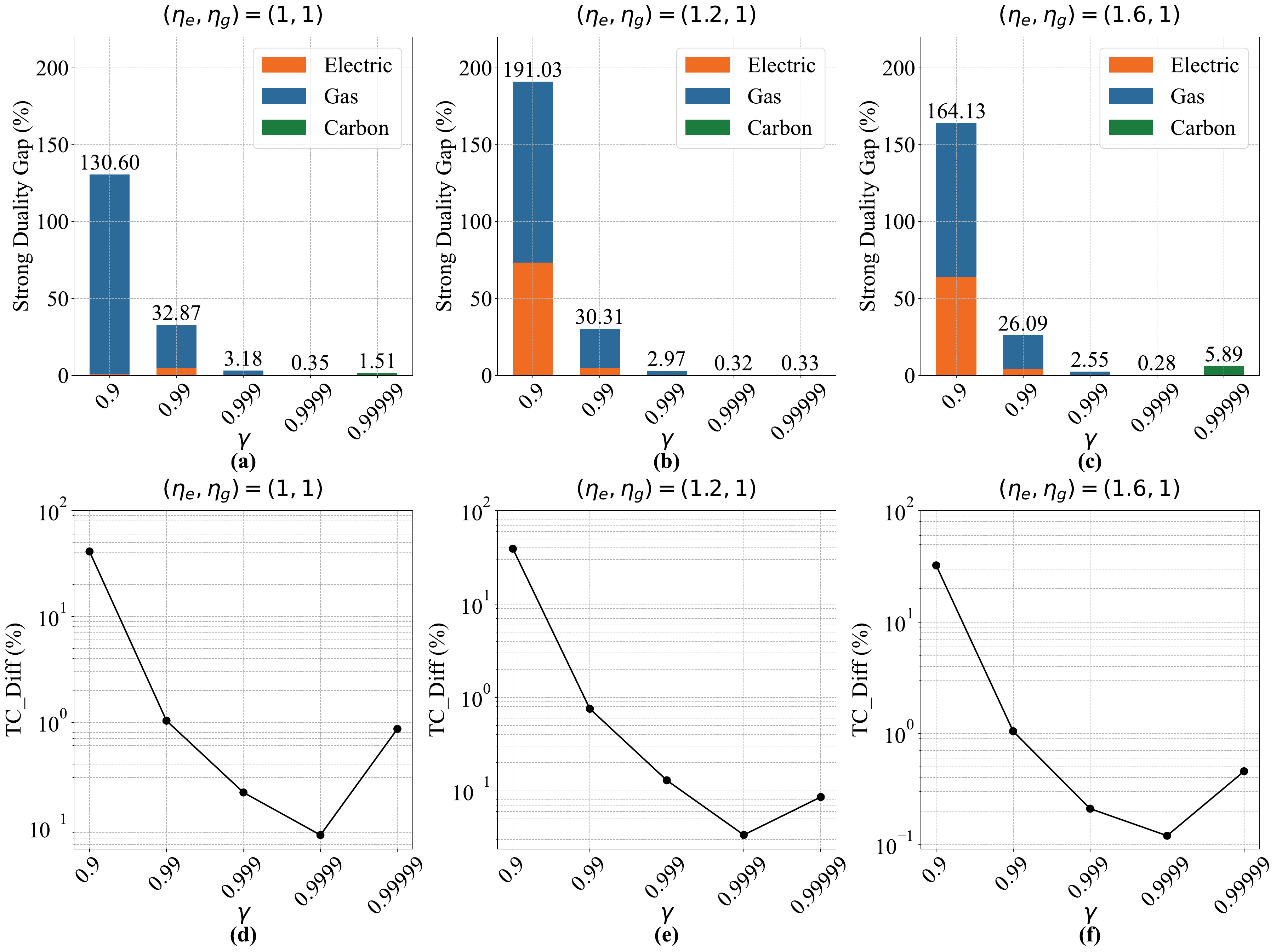}
        \caption{Strong Duality Gap (SDG) and total cost differences under different scaling factors $\gamma$ and $\eta_e$ conditions: (a)-(c) show the SDG percentages for electricity, gas, and carbon market clearing problems, and (d)-(f) present the total cost differences between UCGCA and BM for three extreme instances $(\eta_e,\eta_g) = {(1,1),(1.2,1),(1.6,1)}$.}
        \label{fig:gamma_effect}
    \end{figure}

In this section, we examine how the scaling factor $\gamma = \{0.9, 0.99, 0.999, 0.9999, 0.99999\}$ affects the strong duality condition and numerical stability for each lower-level market-clearing problem under three extreme conditions $(\eta_e,\eta_g) = \{(1,1),(1.2,1),(1.6,1)\}$. Let $(\bm{z}^*,\bm{x}^*,\bm{y}^*)$ denote the optimal solution of SLP~\eqref{eq:2}. Theorem~\ref{theorem1} proves that as $\gamma$ approaches 1, constraints \eqref{eq:2:sd} ensure strong duality for each sequential lower-level problem. We quantify the strong duality gaps (SDG) using the absolute percentage difference between primal and dual objectives relative to the primal objective for three lower-level problems:
$$
SDG_{e} = \left|\frac{c_{1}^{T}\bm{x}_{1}^* - \bm{y}_{1}^{*T}(b_1 - D_{1}\bm{z}^*)}{c_{1}^{T}\bm{x}_{1}^*}\right| \times 100\%
$$
$$
SDG_{g} = \left|\frac{c_{2}^{T}\bm{x}_{2}^* - \bm{y}_{2}^{*T}(b_2 - B_{2}\bm{x}_{1}^*- D_{2}\bm{z}^*)}{c_{2}^{T}\bm{x}_{2}^*}\right| \times 100\%
$$
$$
SDG_{c} = \left|\frac{c_{3}^{T}\bm{x}_{3}^* - \bm{y}_{3}^{*T}(b_3 - B_{3}\bm{x}_{2}^*- D_{3}\bm{z}^*)}{c_{3}^{T}\bm{x}_{3}^*}\right| \times 100\%
$$

For these three extreme cases where GFPPs incur no financial losses, the results from BM (without scaling factors) should match those from UCGCA (with scaling factors). We measure this convergence using the total cost difference between UCGCA and BM solutions, where total costs comprise electricity, gas, and carbon market costs:
$$
TC\_Diff = \left|\frac{TC^{UCGCA}-TC^{BM}}{TC^{BM}}\right| \times 100%
$$

Fig.~\ref{fig:gamma_effect} demonstrates how the strong duality gaps and total cost differences vary with $\gamma$. The upper plots (a)-(c) show that as $\gamma$ approaches 1, the total strong duality gaps decrease significantly from initial values of 130.60\%, 191.03\%, and 164.13\% (at $\gamma$ = 0.9) to less than 0.4\% (at $\gamma$ = 0.9999), aligning with Theorem~\ref{theorem1}. When $\gamma$ is small ($\gamma$ = 0.9), we observe both high SDG values and large $TC\_Diff$ values (reaching nearly 40\% as shown in plots (d)-(f)), indicating that the single-level approximation fails to approach the original multi-level problem optimal solutions. The lower plots (d)-(f) show that $TC\_Diff$ consistently exhibits a U-shaped pattern across all three $\eta_e$ cases, with values decreasing from nearly 40\% at $\gamma$ = 0.9 to a minimum of approximately 0.1\% at $\gamma$ = 0.9999. However, when $\gamma$ increases to 0.99999, both performance metrics deteriorate. Notably, at $\eta_e = 1.6$, the total SDG increases to 5.89\% (where $SDG_c$ accounts for 5.86\% of this total), and $TC\_Diff$ rises to approximately 0.4\%, indicating numerical instability at this extreme value.

Our analysis identifies $\gamma$ = 0.9999 as the optimal choice for solving UCGCA, where total SDG remains below 0.4\% for all $\eta_e$ conditions, and $TC\_Diff$ reaches its minimum value (approximately 0.1\%) for all three cases. This value achieves the best balance between approximation accuracy and computational stability across $(\eta_e,\eta_g) = \{(1,1),(1.2,1),(1.6,1)\}$.

\section{Conclusion}\label{section6}

In energy markets, sequential decision-making is fundamental to market-clearing processes, which operate across temporal, spatial, operational, and hierarchical dimensions. A critical challenge arises because unit commitment (UC) decisions must be executed in advance without knowledge of subsequent gas and carbon market outcomes. This unawareness becomes particularly problematic given the strong interdependencies between electric, natural gas and carbon markets. For example, when gas or carbon prices rise substantially, previously committed gas-fired power plants (GFPPs) in day-ahead UC may become unprofitable to operate, leading to suboptimal and economically inefficient decisions. To address these challenges, we propose a Mixed-Integer Multi-Level problem with Sequential Followers (MIMLSF) framework that explicitly models the hierarchical relationships between markets, enabling UC decisions that anticipate and account for their impacts on subsequent market conditions.

To solve this computationally challenging MIMLSF problem, we asymptotically approximate the multi-level problem as a single-level problem. Specifically, we combine lexicographic optimization with a weighted-sum method through a scaling parameter $\gamma$. This transformation preserves the sequential nature of market clearing processes while allowing the problem to be solved with commercial solvers. To enhance computational performance, we develop a dedicated Benders decomposition technique for the single-level problem. This technique first separates the complex Benders subproblem (BSP) into two tractable BSPs, and then further decomposes these two BSPs into $n$ tractable problems (where $n$ is the number of sequential problems). This multi-level BSP separation eliminates the computational burden and avoids scalability issues of solving the complex BSP as an $n$-coupled problem which involves different scaling factors.

We demonstrate our method's effectiveness by applying it to a four-level unit commitment with gas and carbon awareness (UCGCA) problem in the Northeastern United States, where gas-fired power plants (GFPPs) participate in electricity, gas, and carbon markets. This awareness is achieved by incorporating anticipated gas and carbon market outcomes into the UC problem, and those GFPPs who is expected to experience financial losses due to high gas and carbon costs will be de-committed. Compared existing approaches where system operators have no awareness from subsequent market outcomes, our approach successfully reduces total costs and mitigate gas price surges by making better unit commitment decisions. Moreover, the dedicated Benders decomposition technique achieves a 94.23\% reduction in optimality gaps and a 32.23\% reduction in computing time compared to direct solution methods. We also demonstrate that $\gamma = 0.9999$ provides the best balance between approximated solution quality and numerical stability across three extreme cases in the UCGCA problem.

Promising directions for future work include extending our deterministic MIMLSF formulation to incorporate stochastic and chance-constrained formulations, addressing the growing uncertainties with renewable energy integration in power systems. In addition, the effectiveness of multi-level Benders separation techniques will be validated through real-world case studies.


\section*{Acknowledgement}
The authors would like to thank Prof. G. Byeon and Prof. P. Van Hentenryck for their influential works \citep{8844828,doi:10.1287/ijoc.2021.1128} and the valuable data they provided.

\bibliography{ref.bib}




\end{document}